\theoremstyle{plain} 
\newtheorem{thm}{Theorem}[section]
\newtheorem{lem}[thm]{Lemma}
\newtheorem{cor}[thm]{Corollary}
\newtheorem{prop}[thm]{Proposition}
\newtheorem{clm}[thm]{Claim}
\theoremstyle{definition}
\newtheorem{defn}{Definition}[section]
\theoremstyle{remark}
\newtheorem{rem}[thm]{Remark}
\newtheorem{ass}[thm]{Assumption}
\newcommand{\C}{\mathbb{C}}
\newcommand{\R}{\mathbb{R}}
\newcommand{\T}{\mathbb{T}}
\newcommand{\N}{\mathbb{N}}
\newcommand{\Z}{\mathbb{Z}}
\newcommand\bbA{{\mathbb{A}}}
\newcommand\bbB{{\mathbb{B}}}
\newcommand\HB{H^{\bullet}}
\newcommand{\sfA}{{\mathsf{A}}}
\newcommand{\dr}{\mathrm{d}}
\newcommand{\de}{\partial}
\newcommand{\ii}{{\mathrm{i}}}
\def\g{\mathfrak{g}}
\def\S{\mathbb{S}}
\def\sfA{\mathsf{A}}
\def\sfB{\mathsf{B}}
\def\sfa{\mathsf{a}}
\def\sfb{\mathsf{b}}
\def\calY{\mathcal{Y}}
\def\calH{\mathcal{H}}
\def\calL{\mathcal{L}}
\def\calV{\mathcal{V}}
\def\calP{\mathcal{P}}
\def\calS{\mathcal{S}}
\def\calD{\mathcal{D}}
\def\calB{\mathcal{B}}
\def\calF{\mathcal{F}}
\def\S{\mathcal{S} }
\def\Seff{S^{\text{eff}}}
\def\calSeff{\mathcal{S}^{\text{eff}}}
\begin{document}
\title{Split Chern-Simons theory in the BV-BFV formalism}
\author{Alberto S. Cattaneo${}^1$}
\author{Konstantin Wernli${}^1$}
\address{${}^1$Institut f\"ur Mathematik, Universit\"at Z\"urich, Winterthurerstrasse 190, 8053 Z\"urich, Switzerland}
\email{alberto.cattaneo@math.uzh.ch, konstantin.wernli@math.uzh.ch}
\author{Pavel Mnev${}^{2,3}$}
\address{${}^2$Max-Planck-Institut f\"ur Mathematik, Vivatsgasse 7,
53111 Bonn, Germany}
 \address{${}^3$St. Petersburg Department of V. A. Steklov Institute of Mathematics of the Russian Academy of Sciences,
Fontanka 27, St. Petersburg, 191023 Russia}
\email{pmnev@pdmi.ras.ru}

\thanks{A. S. C. and K. W. acknowledge partial support of SNF Grants No. 200020-149150/1 and PDFMP2\_137103. This research was (partly) supported by the NCCR SwissMAP, funded by the Swiss National Science Foundation, and by the COST Action MP1405 QSPACE, supported by COST (European Cooperation in Science and Technology). P. M. acknowledges partial support of RFBR Grant No. 13-01-12405-ofi-m.}

\tikzset{middlearrow/.style={
        decoration={markings,
            mark= at position 0.5 with {\arrow{#1}} ,
        },
        postaction={decorate}
    }
}
\begin{abstract}
The goal of this note is to give a brief overview of the BV-BFV formalism developed by the first two authors and Reshetikhin in \cite{Cattaneo2014}, \cite{Cattaneo2015} in order to perform perturbative quantisation of Lagrangian field theories on manifolds with boundary, and present a special case of Chern-Simons theory as a new example. 
\end{abstract}
\maketitle
\tableofcontents
\section{Introduction}
Since the proposal of functorial quantum field theory by Atiyah and Segal (\cite{Atiyah1988},\cite{Segal1988}) mathematical research in this topic has progressed far and in many directions (for a review see e.g. the books \cite{Turaev2010} and \cite{Ohtsuki2002}, or the review article \cite{Schwarz2007}). Recently, the first two authors together with Reshetikhin introduced the BV-BFV formalism, which can be seen either as an extension of functorial QFT to perturbative quantisation or, from another viewpoint, as a method to perturbatively quantise gauge theory in the presence of a boundary. The main idea is to unify the Lagrangian Batalin-Vilkovisky (BV) formalism \cite{Batalin1981,Batalin1983} in the bulk and the Hamiltonian Batalin-Fradkin-Vilkovisky (BFV) formalism \cite{Batalin1983a} on the boundary. \\
One possible application is to shed new light on the relation between perturbative techniques and mathematical ideas that are concepts of non-perturbative quantisation, like the Reshitikhin-Turaev invariants (\cite{Reshetikhin1991}, see also \cite{Kirby1991}), and thus ultimately about non-perturbative results to the path integral itself. In this note, a very first step on this road is taken by applying the formalism to a special form of Chern-Simons theory. \\ 
The note is structured as follows: Section 2 delivers a short overview of the relevant formal concepts via the example of abelian BF theory. Section 3 discusses a variant of Chern-Simons theory known as split Chern-Simons theory, in its BV-BFV formulation. Section 4 computes the state of this theory explicitly in lowest orders on the solid torus, which is a first step of constructing the Chern-Simons invariant for lens spaces. 
\section{Overview of the BV and BV-BFV formalisms}
The goal of this section is to give a very brief introduction to the BV-formalism for on manifolds without boundary, and the BV-BFV formalism on manifolds with boundary, for two special examples. 
\subsection{Perturbative Quantisation of Lagrangian field theories}
Fix a dimension $d$. A Lagrangian field theory assigns to every closed $d$-dimensional manifold a \emph{space of fields} $F_M$ and an \emph{action functional} $S_M \colon F_M \to \R$. This action functional is required to be of the form 
$$S_M[\phi] = \int_M \mathcal{L}[\phi(x),\de\phi(x),\ldots], $$ where $\mathcal{L}$, the so-called \emph{Lagrangian density}, should depend only on the fields $\phi$ and finitely many of their derivatives. The critical points of the action functional are called the classical solutions of the theory, and are obtained by solving the Euler-Lagrange equations, also called equations of motion. \\ 
One way of quantising such a theory, suggested by the path integral from quantum mechanics, is to compute ``integrals'' of the form 
$$\int_{F_M}\mathcal{O}[\phi]e^{\frac{i}{\hbar}S_M[\phi]}\mathcal{D}\phi,$$
where $\mathcal{O}$ is an ``observable'', over the space of fields $F_M$ (these integrals are usually also called path integrals, even though they do not involve any paths). In this note we are only interested in the so-called \emph{vaccuum state} or \emph{partition function} 
\begin{equation}\psi = \int_{F_M}e^{\frac{i}{\hbar}S_M[\phi]}\mathcal{D}\phi\label{PartitionFunction}.\end{equation} 
However, in almost all relevant examples the spaces of fields have infinite dimension, and there is no sensible integration theory at hand. One way to still make sense of such expressions in the limit $\hbar \to 0$ is to use (formally) the principle of stationary phase. This produces an expansion in powers of $\hbar$ around critical points of the action. The terms in such an expansion can conveniently be labelled by diagrams, which after their inventor are called Feynman diagrams. A concise introduction can be found in \cite{Polyak2004}. \\
\begin{rem}[Perturbative expansion]
 We will only consider actions of the form $S = S_0 + S_{\text{int}}$ where $S_0$ is the quadratic part (also called ``free'' or ``kinetic'' part). In this case one usually considers the interaction or perturbation term to be small (``weak coupling'') so we can expand the action around critical points of $S_0$ in powers of the interaction (``coupling constant''), and the integral then can be formally computed from the theory of Gaussian moments, usually referred to as \emph{Wick's theorem} in quantum field theory. 
\end{rem}
\subsection{Perturbative quantisation of gauge theories}\label{BVformalism}
In many cases important for physics and mathematics, the Lagrangian is actually \emph{degenerate}, i.e. its critical points are not isolated, and we cannot apply the stationary phase expansion. This is usually due to the presence of symmetries on the space of fields (known as \emph{local} symmetries in physics - as opposed to \emph{global} symmetries like a translational symmetry on the spacetime) that leave the action invariant. \\
Often this problem can be solved by so-called \emph{gauge-fixing} procedures (a thorough introduction to gauge theories from a physical viewpoint can be found in \cite{Henneaux1994}, a concise introduction to the mathematical formalisms in \cite{Mnev2008}). The common idea is to add more fields, corresponding to the generators of those symmetries, to remove the degeneracies in the Lagrangian. The most powerful gauge-fixing procedure (in the sense that it deals with the most general situation) is the Batalin-Vilkovsky formalism(\cite{Batalin1981,Batalin1983}, for a short introduction to the mathematics see \cite{Fiorenza2008}). We will not discuss it in full generality, but rather explain the idea using the example of abelian BF theory, which will be important later in this note. 
\subsubsection{Abelian BF theory}
Let $M$ be a closed manifold. Abelian BF theory has the space of fields 
$$F_M = \Omega^1(M) \oplus \Omega^{d-2}(M) \ni (A, B).$$
Here $\Omega^p(M)$ denotes the vector space of differential $p$-forms on $M$.
The action functional is
$$S_M[A,B] = \int_M B\wedge \dr A$$
and the critical points are simply closed forms $\dr A=0,\dr B = 0$. Obviously, the critical points are not isolated. In fact, adding any exact form to either $A$ or $B$ will leave the action invariant by Stokes' theorem. Therefore, the symmetries of the theory are generated by $\mathcal{A}:=C^{\infty}(M) \oplus \Omega^{d-3}(M)$. An element $(c,\tau) \in \mathcal{A}$ acts on $F_M$ by $(A,B) \mapsto (A + \dr c, B + \dr \tau)$. Since both the space of fields and the space of symmetries are linear here, the space of symmetries can be identified with the space of generators of the symmetries. We then declare the new space of fields to be 
$$F_M^1 := F_M \oplus \mathcal{A}[1].$$
Here $ \mathcal{A}[1]$ means that we give the fields in $\mathcal{A}$ ghost number 1. 
\begin{rem}[Reducible symmetries]
In this note we will only be concerned with dimension $d = 3$, and we will fix this from now. However, in dimension $D \geq 4$, the symmetries of BF theory are \emph{reducible}, that is, ``the symmetries have some symmetries themselves'': We do not change the symmetry of the action given by $(c,\tau)$ if we add to $\tau$ the differential of a $D-4$-form $\tau_2$. In this case one has to introduce the so-called ``ghosts-for-ghosts'' of ghost number 2, which amounts to adding to the space of fields $\Omega^{D-4}(M)[2]$, and continue all the way until one arrives at $\Omega^{D-D}(M)[D-2]$.
\end{rem}
\begin{rem}[Total degree]
Forms commute or anticommute according to their form degree, i.e. if $\omega$ is a $p$-form and $\tau$ is a $q$-form we have $\omega \wedge \tau = (-1)^{pq}\tau \wedge \omega$. If we introduce ghost fields, fields commute or anticommute according to their \emph{total degree}, which is defined to be the form degree plus the ghost number. In BF theory in 3 dimensions, all fields have total degree 1, so all fields anticommute.  
\end{rem}
These new fields are not enough to make the action nondegenerate. One way to resolve the situation is to pass to the \emph{BV space of fields} 
$$\mathcal{F}_M := T^*[-1]F_M^1 = F^1_M \oplus (F^1_M)^*[-1] = F_M \oplus \mathcal{A}[1] \oplus F_M^*[-1] \oplus \mathcal{A}^*[-2]$$ 
and to restrict the action to a Lagrangian submanifold $\mathcal{L} \subset \calF_M$. By Poincar\'e duality, we identify $F(M)^* = (\Omega^{1}(M)\oplus\Omega^{d-2}(M))^* = \Omega^{d-1}(M)\oplus\Omega^2(M)$ and $\mathcal{A}^* = (\Omega^0(M) \oplus \Omega^{d-3}(M))^*=\Omega^d(M)\oplus\Omega^3(M)$. Denoting the dual fields with a $^+$, we summarise the fields and their degrees: 
\begin{table}[h]

\begin{tabular}{ l || c |c| c }
 Field  & Form degree & Ghost number & total degree = ghost number + form degree \\
 $A$& 1 & 0 & 1 \\
 $B$& d-2 = 1 & 0 & d-2 = 1 \\
 $c$&0&1&1\\
 $\tau$&d-3 = 0&1&d-2=1\\
 $A^+$&d-1=2&-1&1\\
 $B^+$&2&-1&1\\
 $c^+$&d=3&-2&1\\
 $\tau^+$&3&-2&1\\
\end{tabular}
\caption{The fields involved in abelian BF theory in dimension 3, with their form degree, ghost number and total degree} \label{FieldsTable}
\end{table}

The new action is then 
\[ \calS_M = \int_M B \wedge \dr A + A^+\wedge \dr c + B^+\wedge\dr\tau. \]  
\begin{rem}[Superfields]
At this point it very convenient to introduce the ``superfields''
\begin{align*}
\sfA &= c + A + B^+ + \tau^+ \in \Omega^{\bullet}(M), \\
\sfB &= \tau + B + A^+ + c^+ \in \Omega^{\bullet}(M). \\
\end{align*}
The action now simply reads 
$$\calS_M = \int_M \sfB \wedge \dr\sfA, $$ 
where only the integral of the top-degree part is non-zero. 
\end{rem} 
\begin{rem}[Structure of the space of fields] 
The grading by ghost number endows $\mathcal{F}_M$ with the structure of a \emph{graded vector space}. The pairing of fields and anti-fields endows $\mathcal{F}_M$ with a so-called \emph{odd symplectic structure} (odd because it pairs fields whose degrees add up to -1, rather than to 0). If $\delta$ denotes the de Rham differential on $\mathcal{F}$, it is given by 
\begin{equation}
 \omega_M = \int_M \delta\sfA \wedge \delta \sfB. \label{BFBVform}
  \end{equation} As every symplectic structure it induces a Poisson bracket, which in this case is called the \emph{BV bracket}. Also, one has the \emph{BV Laplacian} 
$$ \Delta = \sum_{k=0}^3 (-1)^{k+1} \int_M \frac{\delta^2}{\delta\sfA^{(k)}(x)\delta\sfB^{(k)}(x)},$$ 
where $\sfA^{(k)}$ denotes the $k$-form part of $\sfA$. Together with the BV bracket, it gives $\mathrm{Fun}(\mathcal{F})$ the structure of a so-called \emph{BV algebra}. However, in the infinite-dimensional setting this expression for the BV Laplacian is  very singular, and one has to regularise carefully.
\end{rem}
The BV formalism to compute integral \eqref{PartitionFunction} now proceeds as follows: one picks a Lagrangian subspace $\mathcal{L}$ of $\mathcal{F}_M$ such that the BV action has isolated critical points on $\mathcal{L}$. This is the gauge fixing in the BV formalism. The integral 
\begin{equation} 
\psi = \int_{\mathcal{L}}e^{\frac{i}{\hbar}\calS[\phi]}\mathcal{D}\phi
\end{equation}
can be computed by methods of Feynman diagrams. If the BV action satisfies the \emph{Quantum Master Equation} $\Delta(e^{\frac{i}{\hbar}S}) = 0$, then under deformations of $\mathcal{L}$, the result changes by a $\Delta$-exact term. 
\begin{rem}(Quantum and Classical Master Equations) 
The Quantum Master equation $\Delta(e^{\frac{i}{\hbar}\calS}) = 0$ is equivalent to 
$(\calS,\calS)-2i\hbar\Delta\calS = 0$, where $(\cdot,\cdot)$ is the BV bracket. Expanding $\calS$ as a power series in $\hbar$, the degree 0 part $S_0$ has to satisfy $(S_0,S_0) = 0$. This is called the \emph{Classical Master Equation}. 
\end{rem}
\begin{rem}
The statements above can be made entirely precise and rigorously proven for finite-dimensional spaces of fields. In the infinite-dimensional setting, the BV formalism produces a number of postulates that one has to prove in other ways. 
\end{rem}
\begin{rem} [Perturbative expansion]
Abelian BF theory is an example for a free theory (i.e. $S_{\text{int}} \equiv 0$). For theories that are perturbations of free theories, the gauge-fixing for the free part of the theory can be used to compute the expansion in powers of the coupling constant. We will call theories that are perturbations of abelian BF theory ``BF-like''.  
\end{rem}
\subsubsection{Residual fields} \label{ResidualFields}
It can happen that the degeneracy in the quadratic part of the action does not stem from the gauge symmetries alone. This is the case when the operator in the quadratic part of the action has non-trivial ``zero modes'' i.e. it has zeros that are not related under gauge symmetries. In the case of abelian BF theory, the operator in question is the de Rham differential, while the gauge symmetries are given by shifting the fields by exact forms. It follows that the space of inequivalent zero modes is precisely the de Rham cohomology of $M$.\\ In this case the procedure is as follows. One splits the space of fields $\mathcal{F}_M = \mathcal{Y}' \times \mathcal{Y}''$ into a space of \emph{residual fields}\footnote{Also known as background fields, slow fields, infrared fields.} $\mathcal{Y}'$, consisting of representatives of the zero modes, and a complement $\mathcal{Y}''$ that we will call \emph{fluctuations}\footnote{Otherwise known as fast fields or ultraviolet fields.}. One then only integrates over a Lagrangian subspace of $\mathcal{Y}''$, so that the result depends on the residual fields. This yields the definition of the \emph{effective action}: 
\begin{equation}
e^{\frac{i}{\hbar}\calSeff(\phi')} = \int_{\phi''\in\mathcal{L}\subset \mathcal{Y}''}e^{\frac{i}{\hbar}S(\phi',\phi'')}\mathcal{D}\phi''.
\end{equation}
To be compatible with the BV formalism, $\calY'$ and $\calY''$ should be odd symplectic themselves, such that $\calF_M$ has the product structure. In this case, one can prove that in the finite-dimensional case, the QME for the action on $\mathcal{F}$ induces the QME for the effective action. In the case at hand of abelian BF theory, we choose a finite-dimensional space of residual fields, the de Rham cohomology, and one can prove explicitly that the effective action satisfies the QME. 
In the case of abelian BF theory, $\mathcal{Y}'$ should be given by representatives of the de Rham cohomology of $M$. Such a splitting (and a suitable choice of Lagrangian) can then be found e.g. by Hodge decomposition.

\subsection{On manifolds with boundary}\label{ManifoldsWithBdry}
We will now consider the case of manifolds with boundary. The strategy that is compatible with the mathematical idea of gluing of manifolds along boundary components is not to fix  boundary conditions, but instead to think of the state as a functional on the possible boundary fields. \\ 
Consider first the case of a theory without gauge symmetries. Under some assumptions, one can show that a $d$-dimensional field theory induces a space of fields $F^{\de}_{\Sigma}$ on $(d-1)$-dimensional manifolds $\Sigma$ that has a natural symplectic structure. The space of states should be a quantisation of this symplectic manifold. In many examples, $F^{\de}_{\Sigma}$ is actually an affine space, and one can define a quantisation from a Lagrangian polarisation\footnote{This is basically a choice of coordinates and canonically conjugate momenta, similar to the $p$ and $q$ variables in quantum mechanics.} with a smooth leaf space (examples of this are the position or momentum space). $B_{\Sigma}$. In this case, the space of states is the space of functionals on $B_{\Sigma}$. If $\Sigma = \de M$, there is a surjective submersion $F_M \to F^{\de}_{\de M}$ given by restriction of fields to the boundary. If we denote by $p$ the composition of this map with the projection $F^{\de}_{\de M} \to B_{\de M}$, we can define the state by the ``integral'' 
\begin{equation}
\widehat{\psi}_M(\beta) = \int_{p^{-1}(\beta)}e^{\frac{i}{\hbar}S[\phi]}\mathcal{D}\phi.
\end{equation}
\subsection{The BV-BFV formalism}
Now we want to combine this with the method to deal with gauge theories discussed above. Given a space of BV fields $\mathcal{F}_M$ for every $d$-dimensional manifold $M$, there is again an induced space of fields $\mathcal{F}^{\de}_{\Sigma}$ on $d-1$-dimensional manifolds endowed with what is called a BFV structure (see \cite{Schaetz2008} for a mathematical discussion of BFV structure). The result is what is called a \emph{BV-BFV manifold}, whose definition we will now repeat. 
\begin{defn}[BFV manifold]
A \emph{BFV manifold} is a triple $(\mathcal{F},\omega,Q)$, where 
\begin{itemize}
\item $\mathcal{F}$ is a $\Z$-graded manifold, 
\item $\omega = \delta\alpha$ is an exact symplectic form on $\mathcal{F}$,
\item $Q$ is a degree $+1$ vector field,
\end{itemize}
such that
\begin{itemize}
\item $Q$ is symplectic for $\omega$, i.e. $L_Q\omega = 0$, 
\item $Q$ is cohomological, i.e. $Q^2 = 0$ or equivalently $[Q,Q] = 0$. 
\end{itemize}
\end{defn}
For degree reasons this implies the existence of a degree 0 Hamiltonian function $S$ for $Q$, i.e. $\iota_Q\omega = \delta S$ (and the datum of such function specifies a cohomological symplectic vector field) and this function $S$ automatically satisfies the Classical Master Equation $(S,S) = 2\iota_Q\iota_Q\omega = 0$. The $\Z$-grading of the manifold is the ghost number we briefly explained above. 
\begin{defn}[BV-BFV manifold] 
A \emph{BV-BFV manifold} over a given BFV manifold $(\mathcal{F}^{\de},\omega^{\de}=\delta\alpha^{\de},Q^{\de}$ is a quintuple $(\mathcal{F},\omega,Q,S,\pi)$ where 
\begin{itemize}
\item $\mathcal{F}$ is a $\Z$-graded manifold, 
\item $\omega$ is a degree -1 symplectic form, 
\item $Q$ is a degree +1 cohomological  vector field,
\item $\calS$ is a degree 0 function on $\mathcal{F}$,
\item $\pi$ is a surjective submersion $\mathcal{F} \to \mathcal{F}^{\de}$,
\end{itemize}
such that 
\begin{itemize}
\item $\delta\pi(Q) = Q^{\de}$,
\item $\iota_Q\omega = \delta S + \pi^*\alpha^{\de}$.
\end{itemize}
\end{defn} 
The axioms imply the \emph{modified Classical Master Equation} 
\begin{equation}
\frac{1}{2}\iota_{Q}\iota_{Q}\omega_M - \pi^*\calS^{\de} = 0. \label{mCME}
\end{equation}
In many cases, the BV structure on the bulk and the BFV structure on the boundary look very similar in the superfield formalism. \\
Let us look at the example of abelian BF theory on a 3-manifold $M$ with boundary $\de M$ that is included via $\iota \colon \de M \to M$. Let $\mathcal{F}_M$ be the space of BV fields $\Omega^{\bullet}(M)[1] \oplus \Omega^{\bullet}(M)[1] \ni (\sfA,\sfB)$. Denote by $\sfA^{\de}:=\iota^*\sfA,\sfB^{\de}:=\iota^*\sfB$ the restrictions of these fields to the boundary. Then the space of boundary BFV fields is $\mathcal{F}^{\de}_{\de M}=\Omega^{\bullet}(\de M)[1] \oplus \Omega^{\bullet}(\de M)[1] \ni (\sfA^{\de},\sfB^{\de})$. The symplectic form and action have the same form as before
\begin{align*}\omega^{\de}_{\de M}  &= \int_{\de M} \delta \sfA^{\de} \wedge \delta \sfB^{\de}, \\
S^{\de}_{\de M} &= \int_{\de M} \sfB^{\de} \wedge \dr\sfA^{\de},
\end{align*}
and the corresponding Hamiltonian vector field on $\mathcal{F}^{\de}_{\de M}$  is 
\[ Q^{\de}_{\de M} = \int_{\de M} \dr\sfA^{\de} \frac{\delta}{\delta\sfA^{\de}} + \dr\sfB^{\de} \frac{\delta}{\delta\sfB^{\de}}.\]
However, considering table \ref{FieldsTable} and that the dimension of $\de M$ is 2, notice that $\omega^{\de}_{\de M}$ pairs fields of \emph{opposite} ghost number, and thus has degree 0. I.e., $(\mathcal{F}^{\de}_{\de M},\omega^{\de}_{\de M},Q^{\de}_{\de M})$ is a BFV manifold. 
\begin{clm} \label{BFBVBFV}
If we denote 
$$Q_M = \int_M \dr\sfA\frac{\delta}{\delta\sfA} + \dr\sfB\frac{\delta}{\delta\sfB}$$
and $\pi_M = \iota^* \colon \mathcal{F}_M \to \mathcal{F}^{\de}_{\de M}$ the restriction of fields to the boundary, then in abelian BF theory the quintuple
$(\mathcal{F}_M,\omega_M,Q_M,S_M,\pi_M)$ is a BV-BFV manifold over the BFV manifold $(\mathcal{F}^{\de}_{\de M},\omega^{\de}_{\de M},Q^{\de}_{\de M})$. 
\end{clm}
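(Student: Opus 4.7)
My plan is to verify the axioms of a BV-BFV manifold one by one, treating the BFV axioms on $(\mathcal{F}^{\de}_{\de M},\omega^{\de}_{\de M},Q^{\de}_{\de M})$ as a warm-up for the corresponding bulk computation. The degree bookkeeping is straightforward: every component of the superfields $\sfA,\sfB$ has total degree $1$, so on a $d$-dimensional manifold $N$ the pairing $\int_N \delta\sfA\wedge\delta\sfB$ contributes form-degree $2$ minus dimension $d$ to the ghost degree. For $d=3$ this gives degree $-1$ (the BV symplectic form $\omega_M$), and for $\de M$ of dimension $2$ it gives degree $0$ (the BFV symplectic form $\omega^{\de}_{\de M}$). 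Similarly $Q_M$ and $Q^{\de}_{\de M}$ have degree $+1$ since $\dr$ raises form degree by $1$.

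Next I would verify the BFV axioms on the boundary. Cohomologicity $(Q^{\de})^2=0$ is immediate from $\dr^2=0$. The Hamiltonian relation $\iota_{Q^{\de}}\omega^{\de} = \delta S^{\de}$ follows from integration by parts on the \emph{closed} boundary $\de M$: varying $S^{\de}_{\de M} = \int_{\de M}\sfB^{\de}\wedge\dr\sfA^{\de}$ and using Stokes with no boundary term yields $\delta S^{\de} = \int_{\de M}(\delta\sfB^{\de}\wedge\dr\sfA^{\de} \pm \dr\sfB^{\de}\wedge\delta\sfA^{\de})$, which is exactly $\iota_{Q^{\de}}\omega^{\de}$ up to the standard graded signs. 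This also exhibits $S^{\de}$ as a well-defined Hamiltonian of degree $0$, so $Q^{\de}$ is automatically symplectic.

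For the BV-BFV structure itself, I would first check the easy items: $\pi_M=\iota^*$ is surjective because any smooth form on $\de M$ extends to $M$, and it is a submersion since the derivative is the restriction map, which is onto at every point; the intertwining $\delta\pi_M(Q_M)=Q^{\de}_{\de M}$ follows from the fact that pullback and de Rham differential commute, $\iota^*\dr=\dr\iota^*$. The core computation is the modified Hamiltonian relation $\iota_{Q_M}\omega_M = \delta S_M + \pi_M^*\alpha^{\de}$, with $\alpha^{\de}=\int_{\de M}\sfB^{\de}\wedge\delta\sfA^{\de}$ a chosen primitive of $\omega^{\de}$. Varying the bulk action,
\begin{equation*}
\delta S_M = \int_M \delta\sfB\wedge\dr\sfA + (-1)^{|\sfB|}\sfB\wedge\dr\delta\sfA,
\end{equation*}
I integrate by parts in the second summand using Stokes on $M$, producing the bulk term $\pm\int_M \dr\sfB\wedge\delta\sfA$ and the boundary term $\pm\int_{\de M}\iota^*(\sfB\wedge\delta\sfA) = \pm\pi_M^*\alpha^{\de}$. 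Matching this with the direct expansion of $\iota_{Q_M}\omega_M = \int_M(\dr\sfA\wedge\delta\sfB \mp \delta\sfA\wedge\dr\sfB)$ closes the identity.

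The main obstacle, and essentially the only non-trivial point, is keeping track of the Koszul signs: the superfields are inhomogeneous in form degree, $\delta$ is an odd derivation of degree $+1$, and $\iota_Q$ acts as a graded derivation on the symplectic $2$-form. I would handle this once and for all by computing with total degrees (treating each $k$-form component separately and reassembling), after which the choice of sign convention for $\alpha^{\de}$ is forced. Everything else reduces to Stokes' theorem and $\dr^2=0$, and once these signs are fixed the claim follows immediately.
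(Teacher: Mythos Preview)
Your proposal is correct and follows essentially the same approach as the paper: the core step is exactly the integration-by-parts computation showing $\delta S_M = \iota_{Q_M}\omega_M - \pi_M^*\alpha^{\de}_{\de M}$ with $\alpha^{\de}_{\de M}=\int_{\de M}\sfB\wedge\delta\sfA$. In fact you are more thorough than the paper, which explicitly verifies only this central BV-BFV identity and leaves the degree checks, cohomologicity, surjectivity of $\pi_M$, and the intertwining $\delta\pi_M(Q_M)=Q^{\de}_{\de M}$ implicit.
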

\begin{proof}
We will just prove the central BV-BFV identity $\iota_{Q_M}\omega_M = \delta S_M + \pi^*\alpha^{\de}_{\de M}$. Notice that the de Rham differential on $\mathcal{F}_M$ is given by 
$$ \delta = \int_M \delta \sfA \frac{\delta}{\delta\sfA} + \delta\sfB\frac{\delta }{\delta\sfB}$$ and one choice of $\alpha^{\de}_{\de M} $ is 
$$ \alpha^{\de}_{\de M} = \int_{\de M} \sfB \wedge \delta \sfA. $$On the one hand,
$$\iota_{Q_M}\omega_M = \int_M \dr\sfA \wedge \delta\sfB + \delta\sfA \wedge\dr\sfB.$$  
On the other hand, integrating by parts yields
\begin{align*}\delta \calS_M &= \delta \int_M \sfB \wedge \dr\sfA = \int_M \sfB \wedge \dr \delta A + \int_M \delta B \wedge \dr\sfA \\
&= \int_M \dr\sfB \wedge \delta \sfA + \int_M \delta B \wedge \dr\sfA- \int_{\de M} \sfB \wedge \delta\sfA = \iota_{Q_M}\omega_M - \pi^*_M\alpha^{\de}_{\de M}.
\end{align*}
\end{proof}
\subsection{The quantum BV-BFV formalism}
We now explain the data of a quantum BV-BFV theory and show to quantise in the example of abelian BF theory, before turning to the example of Chern-Simons theory. The perturbative quantisation of a BV-BFV theory consists of the following data: 
\begin{enumerate}
\item A cochain complex $(\mathcal{H}^{\mathcal{P}}_{\Sigma},\Omega^{\mathcal{P}}_{\Sigma})$ for every $(d-1)$-manifold $\Sigma$ with a choice of polarisation in $\mathcal{F}^{\de}_{\Sigma}$. 
\item A finite-dimensional BV manifold $(\mathcal{V}_M,\Delta_{\mathcal{V}_M})$ - called the space of \emph{residual fields} - associated to every $d$-manifold $M$ and polarisation $\mathcal{P}$ on $\mathcal{F}^{\de}_{\de M}$. 
\item Let $\widehat{\mathcal{H}}^{\mathcal{P}}_M := \mathcal{H}^{\mathcal{P}}_{\de M} \hat{\otimes} C^{\infty}(\mathcal{V}_M)$ and endow it with the two commuting coboundary operators $\widehat{\Omega}^{\mathcal{P}}_{M} := \Omega^{\mathcal{P}}_{\de M } \otimes \operatorname{id}$ and $\widehat{\Delta}^{\mathcal{P}}_M = \operatorname{id} \otimes \Delta_{\mathcal{V}_M}$. Then we require the existence of a \emph{state} $\widehat{\psi}_M$ satisfying the \emph{modified Quantum Master Equation} (mQME) 
\begin{equation}
(\hbar^2\widehat{\Delta}^{\mathcal{P}}_M + \widehat{\Omega}^{\mathcal{P}}_{M})\widehat{\psi}_M = 0,
\end{equation}
the quantum counterpart of the mCME \eqref{mCME}.
\end{enumerate}
Some comments are in order. The cochain complex $(\mathcal{H}^{\mathcal{P}}_{\Sigma},\Omega^{\mathcal{P}}_{\Sigma})$ is to be constructed as a geometric quantisation of the symplectic manifold $\mathcal{F}^{\de}_{\de M}$ with the polarisation $\calP$ and the action $\calS^{\de}_{\de M}$. Sometimes the boundary action needs to be adapted to the polarisation\footnote{To enforce vanishing of $\alpha^{\de}_{\de M}$ along fibers of $\calP$.}, this can be done by adding a term depending only on boundary fields to it. The new action is denoted $\calS^{\calP}_{\de M}$. The general construction of the boundary quantisation is not important in this note. More important is the idea of residual fields that was explained in section \ref{ResidualFields}. The state is then computed by combining the methods of sections \ref{BVformalism} and \ref{ManifoldsWithBdry}. Again, assume we have a polarisation $\mathcal{P}$ of $\mathcal{F}^{\de}_{\de M}$ with smooth leaf space $\mathcal{B}^{\mathcal{P}}_{\de M}$. In this case we can identify $\mathcal{H}^{\mathcal{P}}_{\Sigma} = \operatorname{Fun}(\mathcal{B}^{\mathcal{P}}_{\de M})$.  We will then assume that actually $\mathcal{F}_{M} = \mathcal{B}^{\mathcal{P}}_{\de M} \times \calY$ so that the fibers of the projection $p\colon \mathcal{F}_M \to \mathcal{B}^{\mathcal{P}}_{\de M}$ are just $\{b\} \times \mathcal{Y}$. We then split $\mathcal{Y} = \mathcal{V}_M \times \mathcal{Y}''$ into a space of residual fields and fluctuations $\mathcal{Y}''$.  
Then we can finally define the state $\widehat{\psi}_M$ by 
\begin{equation}
\widehat{\psi}_M(b, \phi) = \int_{\mathcal{L}\subset\mathcal{Y}''}e^{\frac{i}{\hbar}S_M(b,\phi,\phi'')}\mathcal{D}\phi'' \in  \widehat{\mathcal{H}}^{\mathcal{P}}_M = \mathcal{H}^{\mathcal{P}}_{\de M} \hat{\otimes} C^{\infty}(\mathcal{V}_M).
\end{equation}
Again, we define the BV effective action by 
\begin{equation}
\widehat{\psi}_M(b, \phi) = e^{\frac{i}{\hbar}\Seff(b,\phi)}.
\end{equation}
Instead of entering a general discussion of the above, let us continue the example of abelian BF theory. \pagebreak
\subsection{Abelian BF theory in the quantum BV-BFV formalism}
\subsubsection{Polarisations}Here there are two easy polarisations on $\mathcal{F}^{\de}_{\de M} = \Omega^{\bullet}(\de M)[1] \oplus \Omega^{\bullet}(\de M)[1]$, namely the ones given by $\frac{\delta}{\delta\sfA^{\de}}$ (whose leaf space  can be identified with the $\sfB^{\de}$ fields) and $\frac{\delta}{\delta\sfB^{\de}}$ (whose leaf space can be identified with the $\sfA^{\de}$ fields). 
Let now $M$ be a manifold with boundary $\de M = \de_1 M \sqcup \de_2 M$. We then define the polarisation $\mathcal{P}$ to be the $\frac{\delta}{\delta\sfB^{\de}}$-polarisation on $\de_1M$ and the $\frac{\delta}{\delta\sfA^{\de}}$-polarisation on $\de_2M$, so that we have the leaf space $\mathcal{B}^{\mathcal{P}}_{\de M} = \Omega^{\bullet}(\de_1M)[1] \oplus \Omega^{\bullet}(\de_2M)[1]$, we denote the coordinates on it by $(\bbA,\bbB)$. The correct way to adapt the action is to subtract the term $f^{\calP}_{\de M} = \int_{\de M} \sfB^{\de} \wedge \sfA^{\de}$ from it. \\
\subsubsection{Choosing a splitting.} We now split the space of fields $\mathcal{F}_M$ by choosing extensions $\widetilde{\bbA},\widetilde{\bbB}$ and splitting $\sfA = \widetilde{\bbA} + \widehat{\sfA}, \sfB = \widetilde{\bbB} + \widehat{\sfB}$ where $\widehat{\sfA}$ and $\widehat{\sfB}$ restrict to 0 on $\de_1M$ resp. $\de_2M$. It is discussed in \cite{Cattaneo2015} that one needs to require the extensions to be discontinuous extensions by 0 outside of the boundaries. One way to make this more precise is to work with a family of regular decompositions approximating this singular one, resulting a family of states that only in the limit will satisfy the mQME. We will therefore choose these extensions and identify $\widetilde{\bbA} = \bbA, \widetilde{\bbB} = \bbB$. This is our splitting $\mathcal{F}_M = \mathcal{B}^{\mathcal{P}}_{\de M} \times \mathcal{Y}$. \\
\subsubsection{Residual fields and fluctuations.} We now want to split $\mathcal{Y}$ into residual fields and fluctuations. As discussed above, in abelian BF theory the residual fields should contain the de Rham cohomology of $M$. In the case with boundary, for our polarisation, the minimal space of residual fields is 
\begin{equation}
\mathcal{V}_M = \HB(M,\de_1M)[1] \oplus \HB(M,\de_2M)[1].
\end{equation}
We choose representatives $\chi_i \in \Omega^{\bullet}_{closed}(M,\de_1M)$ and $\chi^j \in \Omega^{\bullet}_{closed}(M,\de_2M)$ such that their cohomology classes form a basis of $\HB(M,\de_1M)$ resp. $\HB(M,\de_2M)$ and $\int_M\chi_i\wedge\chi^j = \delta_i^j$. 
Then, we write $\sfa = \sum_iz^i\chi_i,\sfb = \sum_iz_i^+\chi^i$ for elements of $\calV_M \subset \calF_M$. The BV Laplacian $\Delta_{\calV_M}$ is then 
\begin{equation}
\Delta_{\calV_M} = \sum_i - \frac{\de}{\de z^i}\frac{\de}{\de z_i^+}.
\end{equation}
A possible way to choose such a basis, a complement $\mathcal{Y}''$ and a Lagrangian $\mathcal{L} \subset \mathcal{Y}'' $ is to pick a Riemannian metric and use Hodge decomposition on manifolds with boundary. We will avoid the details of this lengthy discussion, referring the interested reader again to \cite{Cattaneo2015}, and simply assume we can decompose the fields $\widehat{\sfA}=\sfa+\alpha, \widehat{\sfB}=\sfb+\beta$  into residual fields and fluctuations.
\begin{rem}[Decomposition of the action] \label{FreeActionSplittingRemark}
The decomposition of the fields also induces a decomposition of the adapted action
\begin{equation} \mathcal{S^{\mathcal{P}}_{\de M}} = \widehat{\mathcal{S}}_{M,0} +  \mathcal{S}_M^{\text{back}}+\mathcal{S}_M^{\text{source}}, \label{FreeActionSplittingEquation}
\end{equation}
where 
\begin{align*}
\widehat{\mathcal{S}}_{M} &= \int_M \beta\wedge\dr\alpha, \\
\mathcal{S}_M^{\text{back}}&= - \left(\int_{\de_2M} \bbB \wedge \sfa  + \int_{\de_1M} \sfb \wedge \bbA\right),\\ 
 \mathcal{S}_M^{\text{source}} &= - \left(\int_{\de_2M} \bbB \wedge \alpha + \int_{\de_1M} \beta \wedge \bbA\right).
\end{align*}
However, since our extension of the boundary fields is singular, this requires some care to obtain. The trick is to decompose the action for non-singular extensions $\widetilde{\bbA},\widetilde{\bbB}$, integrating the $\dr\widetilde{\bbA}$ by parts terms, and then passing to the singular decomposition. 
\end{rem}
\subsubsection{The state.}
We now would like to compute the state 
\begin{equation}\widehat{\psi}_M(\bbA,\bbB,\sfa,\sfb) = \int_{(\alpha,\beta)\in \calL}e^{\frac{i}{\hbar}\calS^{\calP}_M(\bbA + \sfa + \alpha,\bbB + \sfb + \beta)}\calD\alpha\calD\beta \quad \in \widehat{\calH}^{\calP}_M = \mathrm{Fun}(\calB^{\calP}_{\de M}) \hat{\otimes} C^{\infty}(\calV_M).\label{FreeState}\end{equation}
Applying decomposition \eqref{FreeActionSplittingEquation} of the action, and the general theory of performing such Gaussian integrals in quantum field theory, it is enough to understand the integral 
\begin{equation} T_M := \int_{\calL}e^{\frac{i}{\hbar}\widehat{\calS}_M}\calD\alpha\calD\beta \label{TorsionInt}. \end{equation}
For our purposes it is enough to say that $T_M$ is a number independent of the choice of $\calL$ (but that can depend on our choice of representatives of cohomology).   The integral \eqref{FreeState} can then be expressed in terms of the so-called \emph{propagator}\footnote{Also known to physicists as 2-point function or - slightly abusing language - Green's function.} 
\begin{equation}
\eta(x_1,x_2)  = \frac{-1}{T_M}\frac{1}{i\hbar}\int_{\calL}e^{\frac{i}{\hbar}\widehat{\calS}_M}\alpha(x_1)\beta(x_2)\calD\alpha\calD\beta. \label{BFProp} 
\end{equation}
Namely, 
\begin{equation} 
\widehat{\psi}_M(\bbA,\bbB,\sfa,\sfb) = T_Me^{\frac{i}{\hbar}\Seff(\bbA,\bbB,\sfa,\sfb)},
\end{equation}
with 
\begin{equation} 
\Seff(\bbA,\bbB,\sfa,\sfb) = - \left(\int_{\de_2M} \bbB \wedge \sfa  - \int_{\de_1M} \sfb \wedge \bbA\right) - \int_{\de_2M\times\de_1M}\pi^*_1\bbA\wedge\eta\wedge\pi^*_2\bbB.
\end{equation}
\subsubsection{The propagator} The propagator $\eta$ is a $(d-1)$-form on the configuration space $C^0_2(M) = \{ (x_1,x_2) \in M \times M: x_1 \neq x_2 \}$ that vanishes for $x_2 \in \de_1M$ or $x_1 \in \de_2M$. It is determined by our choice of gauge fixing Lagrangian. It has two important properties: 
\begin{itemize}
\item Its differential satisfies 
\begin{equation}
\dr\eta = \sum_i (-1)^{\deg \chi_i} \pi_1^*\chi_i\pi_2^*\chi^i. \label{DifferentialOfPorp}
\end{equation}
\item For any $x\in M$, if we fix a chart $\phi \colon U \to \R^3$ satisfying $\phi(x) = 0$, then 
\begin{equation}
\lim_{\varepsilon \to 0}\int_{y \in\de B_{\varepsilon}(0)}\eta(\phi^{-1}(y),x) = 1 = -\lim_{\varepsilon \to 0}\int_{y \in \de B_{\varepsilon}(0)}\eta(x,\phi^{-1}(y)).\label{IntegralofProp}
\end{equation}

\end{itemize}
A choice of such a propagator (and representatives of cohomology) also leads to the definition of a gauge-fixing Lagrangian. For computations with Feynman diagrams it is often desirable to have a propagator satisfying also 
\begin{itemize}
\item \begin{equation} \int_{y \in M} \eta(x,y)\chi_i(y) = \int_{x \in M} \chi^i(x)\eta(x,y) = 0, \label{Koiequalzero}
\end{equation}
\item \begin{equation} \int_{y \in M} \eta(x,y)\eta(y,z) = \int_{x \in M} \eta(z,x)\eta(x,y) = 0. \label{KoKequalzero}
\end{equation}
\end{itemize} 
These properties are not automatic from the definition but they can always be satisfied by picking a suitable $\mathcal{L}$ (see section 4 in\cite{Cattaneo2008} for a discussion on manifolds without boundary, arguments there can be adapted to the case with boundary using machinery in \cite{Cattaneo2015}).
\subsubsection{mQME} In the case of abelian BF theory, the quantisation of the boundary is simply the ``standard'' or ``canonical'' quantisation. It is obtained by the following recipe: In the boundary action, on $\de_1M$ we have to replace every occurence of $\widehat{\sfB}$ by $(-i\hbar\frac{\delta}{\delta \bbA})$, on $\de_2 M$, $\widehat{\sfA}$ has to replaced by $(-i\hbar\frac{\delta}{\delta\bbB})$. Here we have to integrate by parts to do so. The result is 
\begin{equation}
\Omega^{\calP}_{\de M} = (-i\hbar)\left(\int_{\de_1M} \dr\bbA\frac{\delta}{\delta\bbA} + \int_{\de_2M} \dr\bbB \frac{\delta}{\delta\bbB}\right).
\end{equation}
\begin{clm} The state $\widehat{\psi}_M$ defined by \eqref{FreeState} satisfies the mQME \begin{equation}
(\hbar^2\widehat{\Delta}^{\mathcal{P}}_M + \widehat{\Omega}^{\mathcal{P}}_{M})\widehat{\psi}_M = 0.
\end{equation}
\end{clm}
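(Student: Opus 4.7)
The plan is to verify the mQME directly by computing $\hbar^2\widehat{\Delta}^{\mathcal{P}}_M\widehat{\psi}_M$ and $\widehat{\Omega}^{\mathcal{P}}_M\widehat{\psi}_M$ separately and exhibiting a cancellation. Writing $\widehat{\psi}_M = T_M \exp\bigl(\tfrac{i}{\hbar}\Seff\bigr)$, the key structural observation is that $\Seff$ is affine in each $z^i$ and each $z^+_i$ separately (with no mixed $z^iz^+_j$ monomials) and linear in each of $\bbA,\bbB$. Since $\Delta_{\calV_M}\Seff = 0$ by the first observation, the standard calculation of $\Delta$ on an exponential collapses to the quadratic-in-derivatives piece, giving
\begin{equation*}
\hbar^2\widehat{\Delta}^{\mathcal{P}}_M\widehat{\psi}_M \;=\; \sum_i \frac{\partial\Seff}{\partial z^i}\,\frac{\partial\Seff}{\partial z^+_i}\,\widehat{\psi}_M \;=\; \sum_i\Bigl(\int_{\de_2M}\bbB\wedge\chi_i\Bigr)\Bigl(\int_{\de_1M}\chi^i\wedge\bbA\Bigr)\widehat{\psi}_M,
\end{equation*}
up to an overall sign from the graded Leibniz rule.

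Next I compute $\widehat{\Omega}^{\mathcal{P}}_M\widehat{\psi}_M$. The explicit $-i\hbar$ in $\widehat{\Omega}^{\mathcal{P}}_M$ absorbs the $i/\hbar$ produced by differentiating the exponential, so the result equals $\widehat{\psi}_M$ times $\int_{\de_1M}\dr\bbA\wedge\tfrac{\delta\Seff}{\delta\bbA}+\int_{\de_2M}\dr\bbB\wedge\tfrac{\delta\Seff}{\delta\bbB}$. The contribution from the $\bbA$--$\sfb$ and $\bbB$--$\sfa$ couplings in $\Seff$ gives terms of the form $\int_{\de_iM}\dr\bbA\wedge\sfb$ and $\int_{\de_iM}\dr\bbB\wedge\sfa$, which vanish: Stokes' theorem on the closed manifold $\de_iM$ moves $\dr$ onto $\sfa,\sfb$, and these are built from representatives $\chi_i,\chi^i$ that are $\dr$-closed on $M$. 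The remaining contribution comes from the bilinear boundary--boundary propagator term in $\Seff$; integration by parts on each boundary transfers $\dr$ onto the propagator, and summing the two pieces identifies $\dr_{x_1}\eta+\dr_{x_2}\eta$ with the total exterior derivative $\dr\eta$ on $C_2^0(M)$.

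Applying property \eqref{DifferentialOfPorp} replaces $\dr\eta$ by $\sum_i(-1)^{\deg\chi_i}\pi_1^*\chi_i\wedge\pi_2^*\chi^i$. Restricting to $\de_2M\times\de_1M$ and using the vanishing conditions $\chi_i|_{\de_1M}=0$ and $\chi^i|_{\de_2M}=0$ to discard factors that vanish on the respective boundary, the bilinear term factorises into exactly $\sum_i(\int_{\de_2M}\bbB\wedge\chi_i)(\int_{\de_1M}\chi^i\wedge\bbA)$, with the sign opposite to the one in the $\widehat{\Delta}$ computation. Adding the two gives $(\hbar^2\widehat{\Delta}^{\mathcal{P}}_M+\widehat{\Omega}^{\mathcal{P}}_M)\widehat{\psi}_M=0$.

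The main obstacle is pure bookkeeping: one must be scrupulous about signs arising from the graded Leibniz rule in total degree, from reordering wedge products of $\bbA,\bbB,\chi_i,\chi^i$, and from commuting $\dr$ past forms of various parities, so that the signs in the $\widehat{\Delta}$ and $\widehat{\Omega}$ computations really are opposite. There is no analytic subtlety, because $\Seff$ is a free-theory effective action containing no bulk Feynman integrals; in particular the short-distance properties \eqref{IntegralofProp} of $\eta$ play no role in the verification.
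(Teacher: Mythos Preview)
Your proof is correct and follows essentially the same route as the paper: both reduce $\hbar^2\Delta e^{\frac{i}{\hbar}\Seff}$ to the BV bracket of $\Seff$ with itself (using $\Delta\Seff=0$), compute $\Omega$ on the propagator term via integration by parts and property \eqref{DifferentialOfPorp}, and then match signs using $\deg z^i = 1-\deg\chi_i$. One small remark: your invocation of the vanishing conditions $\chi_i|_{\de_1M}=0$, $\chi^i|_{\de_2M}=0$ is superfluous here, since on $\de_2M\times\de_1M$ the factor $\pi_1^*\chi_i$ is evaluated on $\de_2M$ and $\pi_2^*\chi^i$ on $\de_1M$, so nothing needs to be discarded.
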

\begin{proof} Since $ \Seff $ is only linear in coordinates on $\calV_M$, it is immediate that $\Delta\Seff = 0$. In this case $(\hbar^2\Delta + \Omega)e^{\frac{i}{\hbar}\Seff}  = -\frac{1}{2}(\Seff ,\Seff)e^{\frac{\ii}{\hbar}\Seff} + \Omega e^{\frac{\ii}{\hbar}\Seff}. $ Only the first two terms in the action depend on the residual fields and hence contribute to the BV bracket. Also, only the bracket of $\sfb$ with $\sfa$ is nontrivial, so we have 
\begin{align*}\frac{1}{2}(\calSeff,\calSeff) &= \left(\int_{\de_2M} \bbB \wedge \sfa  , \int_{\de_1M} \sfb \wedge \bbA\right) = \sum_{i,j} \left(\int_{\de_2M} \bbB \wedge z^i\chi_i , \int_{\de_1M} z_j^+\chi^j \wedge \bbA\right) \\
&= \sum_i (-1)^{\deg z^i}\int_{\de_2M} \bbB \wedge \chi_i  \int_{\de_1M} \chi^j \wedge \bbA,\end{align*}
since $(z^i,z_j^+) = (-1)^{\deg z^i}\Delta(z^iz_j^+) = (-1)^{\deg z^i}$. On the other hand, 
\begin{align*}\Omega e^{\frac{\ii}{\hbar}\Seff} &= \left(\left(\int_{\de_1M} \dr\bbA\frac{\delta}{\delta\bbA} + \int_{\de_2M} \dr\bbB \frac{\delta}{\delta\bbB}\right)\calSeff\right)e^{\frac{\ii}{\hbar}\calSeff} = \left(\int_{\de_2M\times\de_1M}\pi^*_1\bbA\wedge\dr\eta\wedge\pi^*_2\bbB\right)  e^{\frac{\ii}{\hbar}\calSeff}\\
&=\sum_i (-1)^{\deg \chi^i+1}\int_{\de_2M} \bbB \wedge \chi_i  \int_{\de_1M} \chi^j \wedge \bbA,\end{align*}
where we integrated by parts and used property \eqref{DifferentialOfPorp}. Now the claim follows from the fact that $\deg z^i = 1 - \deg \chi^i$.
\end{proof}

\subsubsection{Dependence of the state on the gauge-fixing.} Clearly, the state defined in \eqref{FreeState} depends on the choice of the gauge-fixing. However, one can show (and, by finite-dimensional arguments, this is supposed to hold in any quantum BV-BFV theory) that, upon deformations of the gauge fixing, the state changes as 
\begin{equation}
\frac{d}{dt}\widehat{\psi} = (\hbar^2\widehat{\Delta}_M + \widehat{\Omega}^{\calP}_M)\widehat{\zeta}
\end{equation}
for some $\widehat{\zeta} \in \widehat{\calH}^{\calP}_M$. \\
\subsubsection{Gluing} 
Suppose we have two manifolds $M_1$ and $M_2$ that share a boundary component $\Sigma$. Then we can glue them together along $\Sigma$ to obtain a new manifold $M = M_1 \cup_{\Sigma} M_2$. The state $\widehat{\psi}_M$ can now be computed from the states $\widehat{\psi}_{M_1}$ and $\widehat{\psi}_{M_2}$ in the following way: Fix polarisations such that $\Sigma \subseteq \de_1M_1$ on $M_1$ and $\Sigma \subseteq \de_2M_2$ on $M_2$. Denote by $\bbA^{\Sigma}$ coordinates on $\Omega^{\bullet}(\Sigma)[1] \subseteq \calB^{\calP}_{\de M_1}$ and by $\bbB^{\Sigma}$ coordinates on $\Omega^{\bullet}(\Sigma)[1] \subseteq \calB^{\calP}_{\de M_2}$. Then we define $\widetilde{\psi}_M$ by 
\[ \widetilde{\psi}_M = \int_{\bbA^{\Sigma},\bbB^{\Sigma}}e^{\frac{i}{\hbar}\int_{\Sigma}\bbB^{\Sigma}\bbA^{\Sigma}}\widehat{\psi}_{M_1}\widehat{\psi}_{M_2}.\]
Again, this integration is defined by a variant of Wick's theorem: The integral of a  term in the product of the states is nonzero if we can contract every $\bbA^{\Sigma}$ with to a $\bbB^{\Sigma}$. In this case, we sum over all possibilities to do so, and every contraction of a $\bbA^{\Sigma}(x)$ with a $\bbB^{\Sigma}(y)$ yields a $\delta^{(2)}_{\de M}(x,y)$. \\ 
One also has to take care of the residual fields: This glued state will usually depend on a non-minimal amount of residual fields, and one can pass to the minimal amount of residual fields by a BV pushforward, yielding the ``correct'' state $\widehat{\psi}_M$. 
\subsubsection{BF-like theories} 
As above, we call ``BF-like'' those theories whose action can be decomposed as $\calS_{BF} + \calS_{\text{int}}$. It is useful to also allow for the free part to consist of several copies of abelian BF theories. One way to do this is to change the space of fields to $\calF_M = (\Omega^{\bullet}(M)\otimes V[1]) \oplus (\Omega^{\bullet}(M)\otimes V^*[1])$ with action 
$$\calS_{M,0} = \int_M \langle \sfB, \dr\sfA \rangle.$$
The discussion above goes through. The only thing that changes in the gauge fixing is that we should replace $\eta$ by $\tilde{\eta}=\eta \otimes \operatorname{id}_V \in \Omega(C^0_2(M))\otimes (V\otimes V^*)$, so that in any basis $\xi_i$ of $V$ with dual basis $\xi^i$ it is given by $$\tilde{\eta}(x_1,x_2) = \eta(x_1,x_2) \delta^i_j \xi_i\otimes\xi^j.$$
\section{Chern-Simons theory as a BF-like theory}
\subsection{Split BV Chern-Simons theory}
Let $\mathfrak{g}$ be a Lie algebra with an ad-invariant pairing $\langle\cdot,\cdot\rangle\colon \mathfrak{g}\times\mathfrak{g}\to\R$, i.e we have for all $x,y,z \in\g$ that $\langle x,[y,z] \rangle =\langle [x,y],z \rangle $. Let $M$ be a 3-manifold, and $\mathsf{C}\in\Omega^{\bullet}(M)\otimes \g[1].$ Then the BV Chern-Simons action is \cite{Cattaneo2014}
$$S[\mathsf{C}]=\int_M \frac{1}{2}\langle\mathsf{C},\dr\mathsf{C}\rangle + \frac{1}{6}\langle \mathsf{C},[\mathsf{C},\mathsf{C}]\rangle,$$  
where for homogeneous elements $A \otimes v, B\otimes w \in \Omega^{\bullet}(M) \otimes \mathfrak{g}$ the bracket  and the pairing are defined by 
$$ [A\otimes v, B \otimes w] = A\wedge B \otimes [v,w] $$ and $$ \langle A \otimes v, B \otimes w \rangle =\langle v, w \rangle A \wedge B $$ respectively. 
Now assume that the Lie Algebra $\g$ admits a splitting $g = V \oplus W$ into maximally isotropic subspaces, i.e. the pairing restricts to 0 on $V$ and $W$ and $\dim V = \dim W = \frac{\dim \mathfrak{g}}{2}$. Then we can identify $W\cong V^*$ via the pairing and decompose $\mathsf{C} = \mathsf{A} + \mathsf{B}$, where $\mathsf{A} \in \Omega^{\bullet}(M) \otimes V[1] $and $\mathsf{B} \in \Omega^{\bullet}(M) \otimes W[1]$. The action decomposes into a ``free'' or ``kinetic'' part 
$$S_{free} = \int_M \frac{1}{2}  \langle\mathsf{C},\dr\mathsf{C}\rangle = \int_M \frac{1}{2}\langle\sfA +\sfB,\dr\sfA + \dr\sfB\rangle = \int_M \frac{1}{2}\langle\sfA,\dr\sfB\rangle +\frac{1}{2}\langle\sfB,\dr\sfA\rangle = \int_M \langle\sfB,\dr\sfA\rangle $$
(where $\langle\sfA,\dr\sfA\rangle=0=\langle\sfB,\dr\sfB\rangle$ by isotropy and we integrate by parts) and an ``interaction'' term 
$$\mathcal{V}\langle\sfA,\sfB\rangle = \frac{1}{6}\langle\sfA+\sfB,[\sfA+\sfB,\sfA+\sfB]\rangle.$$
Hence, the theory is ``BF-like''.
\subsection{Perturbative Expansion}

Let $M$ be a 3-manifold, possibly with boundary. We want to compute the state $\widehat{\psi}_M$. As described above for the BF example, we choose a decomposition of the boundary $\de M= \de_1 M \sqcup \de_2M$ and get a polarisation on the space of boundary fields such that $\mathcal{B}^{\mathcal{P}}_{\de M} = \mathcal{B}_1 \times \mathcal{B}_2 \ni (\bbA,\bbB)$. Decomposing $\sfA = \bbA + \mathsf{a} + \alpha, \sfB = \bbB + \mathsf{b} + \beta$, we can decompose the action as explained in remark \ref{FreeActionSplittingRemark}:
$$ \mathcal{S^{\mathcal{P}}_{\de M}} = \widehat{\mathcal{S}}_{M,0} + \widehat{\mathcal{S}}_{M,\text{pert}} + \mathcal{S}_M^{\text{back}}+\mathcal{S}_M^{\text{source}},$$
where 
\begin{align*}
\widehat{\mathcal{S}}_{M,0} &= \int_M \langle\beta,\dr\alpha\rangle, \\
\widehat{\mathcal{S}}_{M,\text{pert}} &= \int_M \mathcal{V}(\widehat{A},\widehat{B}), \\
 \mathcal{S}_M^{\text{back}}&= - \left(\int_{\de_2M} \langle\bbB,\mathsf{a}\rangle + \int_{\de_1M} \langle\mathsf{b},\bbA\rangle\right),\\ 
 \mathcal{S}_M^{\text{source}} &= - \left(\int_{\de_2M} \langle\bbB,\alpha\rangle + \int_{\de_1M} \langle\beta,\bbA\rangle\right).
\end{align*}
The state is given by 
$$\widehat{\psi}_M = \widehat{\psi}_M(\bbA,\bbB,\mathsf{a},\mathsf{b}) = \int_\mathcal{L} e^{\frac{i}{\hbar}\mathcal{S}^{\mathcal{P}}_M},$$ 
where $\mathcal{L} \ni (\alpha,\beta)$, the gauge-fixing Lagrangian, does not depend (in this case) on the boundary and background fields. By virtue of the above decomposition, we can rewrite this as 
$$\widehat{\psi}_M(\bbA,\bbB,\mathsf{a},\mathsf{b}) = e^{\frac{i}{\hbar}\S^{\text{back}}_M}\int_{\mathcal{L}} e^{\frac{i}{\hbar}\widehat{\S}_{M,0}}e^{\frac{i}{\hbar}\widehat{\S}_{M,\text{pert}}}e^{\frac{i}{\hbar}\S^{\text{source}}_M}.
$$
To do a perturbative (power series) expansion\footnote{Actually, a semiclassical expansion around the classical solution given by the trivial connection},  expand the exponentials 
\begin{align*}
\widehat{\psi}_M(\bbA,\bbB,\mathsf{a},\mathsf{b})  
&= \sum_k\frac{1}{k!}\left(-\frac{i}{\hbar}\right)^k\left(\int_{\de_2M}\langle\bbB,\sfa\rangle +\int_{\de_1M}\langle\sfb\bbA\rangle\right)^k\int_{\mathcal{L}}e^{i\widehat{\S}_{M,0}}\sum_l\frac{1}{l!}\left(\frac{i}{\hbar}\right)^l\left(\int_M\mathcal{V}(\widehat{A},\widehat{B})\right)^l \\
&\sum_m\frac{1}{m!}\left(-\frac{i}{\hbar}\right)^m\left(\int_{\de_2M}\langle\bbB,\alpha\rangle +\int_{\de_1M}\langle\beta,\bbA\rangle\right)^m \\
&= \sum_{k,l,m}\frac{1}{k!l!m!}(-1)^{k+m}\left(\frac{i}{\hbar}\right)^{k+l+m}\left(\int_{\de_2M}\langle\bbB,\sfa\rangle +\int_{\de_1M}\langle\sfb,\bbA\rangle\right)^k \\ 
&\int_{\mathcal{L}}e^{i\widehat{\S}_{M,0}}\left(\int_M\mathcal{V}(\widehat{\sfA},\widehat{\sfB})\right)^l \left(\int_{\de_2M}\langle\bbB,\alpha\rangle +\int_{\de_1M}\langle\beta,\bbA\rangle\right)^m \\
&= \sum_{l,k,m}\frac{1}{k!l!m!}(-1)^{k+m}\left(\frac{i}{\hbar}\right)^{k+l+m}\left(\int_{\de_2M}\langle\bbB,\sfa\rangle +\int_{\de_1M}\langle\sfb,\bbA\rangle\right)^k \\
&\int_{\mathcal{L}}e^{i\widehat{\S}_{M,0}} \left(\int_M\frac{1}{6}\left\langle\widehat{\sfA}+\widehat{\sfB},\left[\widehat{\sfA}+\widehat{\sfB},\widehat{\sfA}+\widehat{\sfB}\right]\right\rangle\right)^l 
\left(\int_{\de_2M}\langle\bbB,\alpha\rangle +\int_{\de_1M}\langle\beta,\bbA\rangle\right)^m .
\end{align*}
Now we choose a basis $\xi_i$ of $V$ and let $\xi^i$ be the corresponding dual basis of $W$. We expand our fields $\sfA = \sfA^i\xi_i, \sfB = \sfB_i\xi^i$ and also their decompositions accordingly, i.e. $\alpha = \alpha^i\xi_i$, and so on. We then get e.g. $\langle\sfB,\dr\sfA\rangle = \sfB_i\dr\sfA^i$. We now want to expand the perturbation term in this basis. For this purpose we make use of the fact that $\langle X, [Y,Z] \rangle = \langle Z, [X,Y] \rangle = \langle Y, [Z,X] \rangle$ for any $X,Y,Z \in \Omega^{\bullet}(M) \otimes \mathfrak{g}[1]$, so we can decompose the interaction term as 
$$ \mathcal{V}(\widehat{\sfA},\widehat{\sfB}) = \frac{1}{6}\langle \widehat{\sfA},[\widehat{\sfA},\widehat{\sfA}]\rangle + \frac{1}{2}\langle \widehat{\sfB},[\widehat{\sfA},\widehat{\sfA}]\rangle + \frac{1}{2}\langle \widehat{\sfA},[\widehat{\sfB},\widehat{\sfB}]\rangle + \frac{1}{6}\langle \widehat{\sfB},[\widehat{\sfB},\widehat{\sfB}]\rangle. $$
Now we make the following simplifying assumption on $\mathfrak{g}$.
\begin{ass} 
The splitting $\mathfrak{g}= V \oplus W$ is actually a splitting into \emph{Lie subalgebras}, i.e. $(\mathfrak{g},V,W)$ is a Manin triple.
\end{ass} 
By isotropy of the subspaces, this implies that the terms $\langle \widehat{\sfA},[\widehat{\sfA},\widehat{\sfA}]\rangle$ and $\langle \widehat{\sfB},[\widehat{\sfB},\widehat{\sfB}]\rangle$ vanish. 
Expanding the perturbation term,  terms of the type $\langle\gamma_1,[\gamma_2,\gamma_3]\rangle$, where $\gamma_i \in \{\sfa,\alpha,\sfb,\beta\}$, each of which can be expanded as $f_{ijk}\gamma^i_1\gamma_2^j\gamma_3^k$. Integration over $\mathcal{L}$ can then be performed using Wick's theorem. Let $\eta$ be an abelian BF propagator on $M$ as discussed above. We exchange integrals over $M,\de_i M $ and $\mathcal{L}$ and get an integrand which is a sum of products of forms $\gamma$. By the Wick theorem, the integral vanishes except for the case where there are precisely as many $\alpha$'s as $\beta$'s, in which case 
$$\int_{\mathcal{L}}e^{i\widehat{\S}_{M,0}}\alpha^{j_1}(x_{1})\cdots\alpha^{j_n}(x_{n})\beta^{k_1}(y_1)\cdots\beta^{k_n}(y_n)=T_M (-i\hbar)^n \sum_{\sigma \in S_n}\delta^{j_{1}k_{\sigma(1)}}\eta(x_1,y_{\sigma(1)})\cdots\delta^{j_{n}k_{\sigma(n)}}
\eta(x_n,y_{\sigma(n)}),$$
where $T_M = \int_{\mathcal{L}}e^{i\widehat{\S}_{M,0}}$. 
\subsection{Feynman graphs and rules}
After integration over $\mathcal{L}$, we can label the terms in the perturbative expansion by graphs as follows. Fix $k,l,m \in \N_0$. We consider graphs $\Gamma$ with three types of vertices: 
\begin{itemize}[-]
\item \emph{Boundary background vertices:} There are $k$ of these distributed on $\de M$. They are labelled by $\bbB\sfa$ if they lie on $\de_2M$ and $\sfb\bbA$ if they lie on $\de_1M$.
\item \emph{Boundary source vertices: } There are $m$ boundary source vertices distributed on $\de M$. They are labelled by $\bbB\alpha$ on $\de_2 M$ and $\bbA\beta$ on $\de_1 M$. Vertices on $\de_2M$ have an arrow tail originating from them, whereas vertices on $\de_1M$ have an arrowhead pointing towards them.
\item \emph{Internal interaction vertices: }There are $l$ internal vertices. They come with three  half-edges which are labelled by $\gamma_i$'s in $\{\sfa,\alpha,\sfb,\beta\}$. These half-edges are either marked as leaves if they are labelled by a background, as an arrow tail if they are labelled by $\alpha$, or an arrowhead if they are labelled by $\beta$ 
\end{itemize} 
If it is possible to connect every arrow tail $\alpha$ to an arrowhead $\beta$ (possibly at the same vertex), then the graph resulting from this procedure is called an  \emph{admissible graph}. To such a graph we can associate a functional on the space of boundary fields as follows: 
\begin{itemize}[-] 
\item For every background boundary vertex, multiply by  $(-i/\hbar)$ times the label and integrate over the corresponding boundary point.
\item For every internal vertex  multiply by $(-i/\hbar)$ times the correct structure constants (specified by the half-edge labels) and  integrate over $M$.
\item For every leaf, multiply by the corresponding background field evaluated at the point. 
\item For every arrow between vertices in different positions $i \neq j$, with tail labelled by $\alpha^k$ and head $\beta_l$, multiply by a propagator $(-i\hbar)\delta^k_l\eta(x_i,x_j)$. 
\item For every short loop (also called tadpole), i.e. an arrow issueing and ending at the same vertex $i$ ,with tail labelled by $\alpha^k$ and head $\beta_l$, multiply by  $(-i\hbar)\delta^k_l\alpha(x_i)$, where $\alpha \in \Omega^2(M)$ is a so-called ``tadpole form''.\footnote{These contributions can be ignored if the Lie algebra is unimodular (i.e. the structure constants satisfy $f^i_{ik} = 0$) or the Euler characteristic of $M$ is 0. We will restrict ourselves to these cases.}
\item For every source boundary vertex, we multiply by $(-i/\hbar)$ times the corresponding boundary field and integrate over the corresponding boundary point. 
\end{itemize}
We denote the result by $\widehat{\psi}_{\Gamma}$.
Denoting the set of all admissible graphs for $k,l,m$ by $\Lambda_{k,l,m}$, we get
$$ \widehat{\psi}_M(\bbA,\bbB,\mathsf{a},\mathsf{b}) = T_M\sum_{k,l,m}\sum_{\Gamma\in\Lambda_{k,l,m}}\widehat{\psi}_{\Gamma}.$$
\begin{rem}
We can factor out the non-interacting diagram parts (background boundary vertices and source boundary vertices connecting to other source boundary vertices). This will yield a prefactor of $e^{\frac{i}{\hbar}\calSeff_0}$ where $\calSeff_{0}$ is the free effective action 
\begin{equation}
\calSeff_0 = - \left(\int_{\de_2M} \langle\bbB,\mathsf{a}\rangle + \int_{\de_1M} \langle\mathsf{b},\bbA\rangle\right) - \int_{\de_2M \times \de_1M} \pi_1^*\bbB_i\eta\bbA^i \label{SeffFree}\end{equation} 
i.e. the effective action of the unperturbed theory. 
\end{rem}
The remaining interaction diagrams have $l \geq 1$ internal vertices and $m \leq 3l$ boundary vertices. Denoting the set of admissible interaction diagrams by $\Lambda^{int}_{l,m}$, the above expression becomes 
$$ \widehat{\psi}_M(\bbA,\bbB,\mathsf{a},\mathsf{b}) = T_Me^{\frac{i}{\hbar}\calSeff_{0}} \left(1 + \sum_{l=1}^{\infty}\sum_{m=0}^{3l}\sum_{\Gamma\in\Lambda^{int}_{l,m}}
\widehat{\psi}_{\Gamma}\right).$$
Our goal is now to give an asymptotic expansion of the state of the form 
$$\widehat{\psi}_M(\bbA,\bbB,\mathsf{a},\mathsf{b}) =  T_Me^{\frac{i}{\hbar}\calSeff_M}\sum_{j\geq 1}\hbar^jR_j, $$
where $\calSeff_M$ is the so-called \emph{tree effective action}, i.e the sum of all diagrams whose underlying graphs are trees, and $R_j$ denotes the sum of all diagrams that contain at least one loop. 
\section{Split Chern-Simons theory on the solid torus}
In this section we compute a first approximation for the state on the solid torus $K := D \times S^1$ with boundary $\de M = S^1 \times S^1 =: \T^2$. Here we think of $D = \{ z \in \C, |z| \leq 1 \}$ as the closed unit disk in the complex plane. This is not just a simple exercise: Note that since the quantum BV-BFV formalism allows also for the gluing of states, given a state on the solid torus one can compute it also for any manifold that can be glued together from tori (namely, these include all lens spaces). \\
Since the boundary $\T^2$ is connected, there are only two possible choices for $\de_1M$ and $\de_2M$, we choose $\de_1M := \de M$ and $ \de_2M := \emptyset $.
This leads to the following space of backgrounds:
\begin{align*}
\mathcal{V}_M &= \HB_{D1}(M)[1]\otimes V \oplus\HB_{D2}(M)[1] \otimes W = \HB(M,\de M)[1] \otimes V \oplus \HB(M) \otimes W \\ &\cong (\HB(D,\de D) \otimes \HB(S^1)) \otimes V \oplus \HB(S^1))[1] \otimes W.\\
\end{align*}
Let $\mu$ be a normalised generator of $\HB(D,\de D)$, i.e $\int_D\mu = 1$. Denoting $t$ the coordinate on $S^1$, we get that $\chi_1 = \mu dt, \chi_2 = \mu  $ is a basis of $\HB_{D1}(M)[1]$, with dual basis $\chi^1 = 1, \chi^2 = dt$ of $\HB_{D2}(M)[1]$. We can then expand
\begin{align*}
\mathsf{a}^i &= z^{1i}\mu dt + z^{2i}\mu,\\ \mathsf{b}_i&= z_{1i}^+1 + z_{2i}^+dt.
\end{align*}
The canonical BV Laplacian on $\mathcal{V}_M$ is then given by $$\Delta_{\mathcal{V}_M} = -\left(\frac{\de}{\de z^1}\frac{\de}{\de z_1^+} + \frac{\de}{\de z^2}\frac{\de}{\de z_2^+}\right).$$

\subsection{Effective Action on the solid torus} 
Assume as above that $\mathfrak{g} = V \oplus W$ is a Manin triple, i.e
\begin{itemize}
\item $V \cong W^*$ as vector spaces 
\item $V,W$ Lie algebras.
\end{itemize}
Introduce bases $\xi_1,\ldots,\xi_n$ of $V$, $\xi^1,\ldots,\xi^n$ of $W$ such that $\langle \xi_i, \xi^j \rangle = \delta_i^j$. We introduce structure constants in these bases: $[\xi_i,\xi_j]_V = f^k_{ij}\xi_k, [\xi^i,\xi^j]=g_k^{ij}$. 
We can then also decompose the fields 
\begin{align*}
\sfB &= \sfB_i\xi^i = \sfb_i\xi^i + \beta_i\xi^i + \bbB_i\xi^i, \\
\sfA &= \sfA^i\xi_i = \sfa^i\xi_i + \alpha^i\xi_i + \bbA^i\xi_i.
\end{align*}
The fact we have a Manin triple means that in terms of the structure constants we have
\begin{equation} f^k_{ij}g_k^{lm} = f^l_{ik}g^{km}_j - f^l_{jk}g^{km}_i + f^m_{ik}g^{lk}_j - f^m_{jk}g^{lk}_i. \label{bialgebra}\end{equation}
We now want to compute an approximation to the tree effective action by considering tree diagrams that have at most two interaction vertices and at most two boundary vertices. \\ 
We will proceed by the number of interaction vertices. There is only a single connected diagram with no interaction vertices, consisting of a single point on the boundary. It yields the free effective action \eqref{SeffFree} for $\de_2M = \emptyset$, namely 
$$\Seff_0 =  - \int_{\de_1M} \sfb_k\bbA^k.$$
\subsubsection{1-point contribution}
Let us continue with diagrams containing a single interaction vertex. It is now important that the solid torus has zero Euler characteristic, so we do not need to consider tadpoles. Since there can be no arrows issuing from $\de_1M$, diagrams with a half-edge labelled by $\beta$ at the interaction point are not admissible. Also notice that $\sfa \wedge \sfa = 0$ (it is a 4-form on a 3-manifold). In the end, there are only three contributing diagrams:
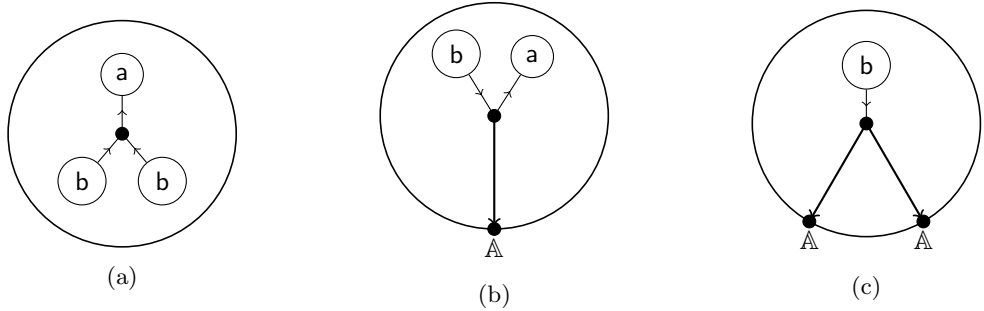
\begin{figure}[h]
\centering
\begin{subfigure}{0.3\textwidth}
\centering
\begin{tikzpicture}[scale=1]

  \node[shape=coordinate] (O) at (0,0) {};

 \tkzDefPoint(1.5,0){A}
  \node (resb) at (0,0.5) [shape=circle,above,draw,minimum size=1pt] {$\sfa$}
edge[middlearrow={<}] (O);
\node (resa) at (-0.3,-0.4) [shape=circle,below left,draw,minimum size=1pt] {$\sfb$}
edge[middlearrow={>}] (O);
\node (resa) at (0.3,-0.4) [shape=circle,below right,draw,minimum size=1pt] {$\sfb$}
edge[middlearrow={>}] (O);
  \tkzDrawCircle(O,A)
  \tkzDrawPoint[color=black,fill=black,size=12](O)
\end{tikzpicture}
\caption{}
\end{subfigure}
\begin{subfigure}{0.3\textwidth}
\centering
\begin{tikzpicture}[scale=1]

  \node[shape=coordinate] (O) at (0,0) {};

  \node[shape=coordinate,label=below:{$\bbA$}] (bdry1) at (0,-1.5) {$\bbA$}
  edge[<-,shorten <= 1.5pt,thick] (O);
\node (resb) at (-0.5,0.5) [shape=circle,above,draw,minimum size=1pt] {$\sfb$}
edge[middlearrow={>}] (O);
\node (resa) at (0.5,0.5) [shape=circle,above,draw,minimum size=1pt] {$\sfa$}
edge[middlearrow={<}] (O);
  \tkzDrawCircle(O,A)
  \tkzDrawPoints[color=black,fill=black,size=12](O,bdry1)
\end{tikzpicture}
\caption{}
\end{subfigure}
\begin{subfigure}{0.3\textwidth}
\centering
\begin{tikzpicture}[scale=1.5]
\node[shape=coordinate] (O) at (0,0) {};
\node (resb) at (0,0.3) [shape=circle,above,draw,minimum size=1pt] {$\sfb$}
edge[middlearrow={>}] (O);
\node[shape=coordinate,label=below:{$\bbA$}] (bdry1) at (canvas polar cs: angle=-120, radius=1cm) {$\bbA$}
edge[<-,shorten <= 1.5pt,thick] (O);
\node[shape=coordinate,label=below:{$\bbA$}] (bdry2) at (canvas polar cs: angle=-60, radius=1cm) {$\bbA$}
edge[<-,shorten <= 1.5pt,thick] (O);

\tkzDrawCircle(O,bdry1)
  \tkzDrawPoints[color=black,fill=black,size=12](O,bdry1,bdry2)

\end{tikzpicture}
\caption{}
\end{subfigure}
\caption{Graphs in the solid torus (depicted in a cross-section) with 1 interaction vertex. A bullet denotes a point we integrate over, a long arrow denotes a propagator.}
\end{figure}

\begin{enumerate}[a)]
\item The single interaction vertex with three leaves labelled by $\sfa, \sfb$ and $\sfb$, corresponding to 
$$  \Seff_1 :=\frac{1}{2}\int_M\langle \sfa, [\sfb,\sfb]\rangle. $$
We should explain some notation. Wew denote by $C_{m,n}(M,\de M)$  (a suitable compactification of) the configuration space of $m$ points in the bulk and $n$ in the boundary. It comes with natural projections $$\pi_i\colon C_{m,n}(M,\de M) \to \begin{cases} &M \quad i\leq m  \\ &\de M \quad i\geq m\end{cases} $$ and $$\pi_{ij}\colon C_{i,j}(M,\de M) \to \begin{cases} &C_2(M) \quad i,j\leq m \\ &C_{1,1}(M,\de M)  \quad i\leq m, j \leq n \\ &C_2(\de M) \quad i,j\geq m\end{cases}. $$ 
By writing $\gamma_i$ resp. $\gamma_{ij}$ we mean the pullback of $\gamma$ under the corresponding projection.
\item The single interaction vertex with two leaves labelled $\sfb$ and $\sfa$ and an arrow connecting to a boundary source vertex $\beta\bbA$. It evaluates to 
$$ \Seff_2 := - \int_{C_{1,1}(M,\de M)}  f^i_{jk} \sfb_{1,i}\sfa_1^j\eta_{12}\bbA_2^k. $$ 
\item The single interaction vertex with a leaf labelled by $\sfb$ and two arrows connecting to two different boundary source vertices. This evaluates to $$S_{\text{eff},3}:= \frac{1}{2}\int_{C_{1,2}(M,\de_1M)}f^i_{jk}b_{1,i}\eta_{12}\eta_{13}
\bbA_2^j\bbA_3^k.$$
\end{enumerate}
\subsubsection{2-point contribution}
Now we consider tree diagrams with two interaction vertices. Since the diagrams have to be connected, there has to be at least one arrow between the vertices. Since we are only considering trees, there is exactly one arrow between them. Also, we are considering only diagrams that have at most two boundary vertices. The diagrams in figure \ref{fig:Diagrams2} below show the admissible graphs in the relevant degrees (admissible graphs with no boundary vertices all evaluate to 0 because of property \ref{Koiequalzero}) \begin{figure}[h]
\centering
\begin{subfigure}{0.3\textwidth}
\centering
\begin{tikzpicture}[scale=1]
\coordinate (O) at (0,0);
 \coordinate (bulk1) at (-0.5,0) {};
 \coordinate (bulk2) at (0.5,0.0) {}
 edge[<-, shorten <= 1.5pt,thick] (bulk1);
\node[shape=coordinate,label=below:{$\bbA$}] (bdry1) at (canvas polar cs: angle=-60, radius=1.5cm) {$\bbA$}
edge[<-,shorten <= 1.5pt,thick] (bulk2);
\node (resb) at (-0.5,0.4) [shape=circle,above left,draw] {$\sfb$}
edge[middlearrow={>}] (bulk1);
\node (resa2) at (-0.5,-0.4) [shape=circle,below left,draw] {$\sfa$}
edge[middlearrow={<}] (bulk1);
\node (resa) at (0.5,0.4) [shape=circle,above right,draw] {$\sfb$}
edge[middlearrow={>}] (bulk2);

\tkzDrawCircle(O,bdry1)
  \tkzDrawPoints[color=black,fill=black,size=12](bulk1,bulk2,bdry1,bdry2)
\end{tikzpicture}
\caption{}
\end{subfigure}
\begin{subfigure}{0.3\textwidth}
\centering
\begin{tikzpicture}[scale=1]
\coordinate (O) at (0,0);
 \coordinate (bulk1) at (-0.5,0) {};
 \coordinate (bulk2) at (0.5,0.0) {}
 edge[<-, shorten <= 1.5pt,thick] (bulk1);
\node[shape=coordinate,label=below:{$\bbA$}] (bdry1) at (canvas polar cs: angle=-60, radius=1.5cm) {$\bbA$}
edge[<-,shorten <= 1.5pt,thick] (bulk2);
\node (resb) at (-0.5,0.4) [shape=circle,above left,draw] {$\sfb$}
edge[middlearrow={>}] (bulk1);
\node (resa2) at (-0.5,-0.4) [shape=circle,below left,draw] {$\sfa$}
edge[middlearrow={<}] (bulk1);
\node (resa) at (0.5,0.4) [shape=circle,above right,draw] {$\sfa$}
edge[middlearrow={<}] (bulk2);

\tkzDrawCircle(O,bdry1)
  \tkzDrawPoints[color=black,fill=black,size=12](bulk1,bulk2,bdry1,bdry2)
\end{tikzpicture}
\caption{}
\end{subfigure}
\begin{subfigure}{0.3\textwidth}
\centering
\begin{tikzpicture}[scale=1]
\coordinate (O) at (0,0);
 \coordinate (bulk1) at (-0.5,0) {};
 \coordinate (bulk2) at (0.5,0.0) {}
 edge[<-, shorten <= 1.5pt,thick] (bulk1);
 \node[shape=coordinate,label=below:{$\bbA$}] (bdry1) at (canvas polar cs: angle=-120, radius=1.5cm) {$\bbA$}
edge[<-,shorten <= 1.5pt,thick] (bulk1);
\node[shape=coordinate,label=below:{$\bbA$}] (bdry2) at (canvas polar cs: angle=-60, radius=1.5cm) {$\bbA$}
edge[<-,shorten <= 1.5pt,thick] (bulk2);
\node (resb) at (-0.5,0.4) [shape=circle,above left,draw] {$\sfb$}
edge[middlearrow={>}] (bulk1);
\node (resa) at (0.5,0.4) [shape=circle,above right,draw] {$\sfa$}
edge[middlearrow={<}] (bulk2);
\tkzDrawCircle(O,bdry1)
  \tkzDrawPoints[color=black,fill=black,size=12](bulk1,bulk2,bdry1,bdry2)
\end{tikzpicture}
\caption{}
\label{fig:diagram1}
\end{subfigure}

\begin{subfigure}{0.3\textwidth}
\centering
\begin{tikzpicture}[scale=1]
\coordinate (O) at (0,0);
 \coordinate (bulk1) at (-0.5,0) {};
 \coordinate (bulk2) at (0.5,0.0) {}
 edge[<-, shorten <= 1.5pt,thick] (bulk1);
 \node[shape=coordinate,label=below:{$\bbA$}] (bdry1) at (canvas polar cs: angle=-120, radius=1.5cm) {$\bbA$}
edge[<-,shorten <= 1.5pt,thick] (bulk1);
\node[shape=coordinate,label=below:{$\bbA$}] (bdry2) at (canvas polar cs: angle=-60, radius=1.5cm) {$\bbA$}
edge[<-,shorten <= 1.5pt,thick] (bulk2);
\node (resb) at (-0.5,0.4) [shape=circle,above left,draw] {$\sfb$}
edge[middlearrow={<}] (bulk1);
\node (resa) at (0.5,0.4) [shape=circle,above right,draw] {$\sfb$}
edge[middlearrow={>}] (bulk2);
\tkzDrawCircle(O,bdry1)
  \tkzDrawPoints[color=black,fill=black,size=12](bulk1,bulk2,bdry1,bdry2)
\end{tikzpicture}
\caption{}
\end{subfigure}
\begin{subfigure}{0.3\textwidth}
\centering
\begin{tikzpicture}[scale=1]
\node[shape=coordinate] (O) at (0,0) {};
\coordinate (bulk1) at (0,0.2) {};
\coordinate (bulk2) at (0,-0.6) {}
 edge[<-, shorten <= 1.5pt,thick] (bulk1);
\node (resb) at (-0.3,0.4) [shape=circle,above left,draw,minimum size=1pt] {$\sfb$}
edge[middlearrow={>}] (bulk1);
\node (resa) at (0.3,0.4) [shape=circle,above right,draw,minimum size=1pt] {$\sfa$}
edge[middlearrow={<}] (bulk1);
\node[shape=coordinate,label=below:{$\bbA$}] (bdry1) at (canvas polar cs: angle=-120, radius=1.5cm) {$\bbA$}
edge[<-,shorten <= 1.5pt,thick] (bulk2);
\node[shape=coordinate,label=below:{$\bbA$}] (bdry2) at (canvas polar cs: angle=-60, radius=1.5cm) {$\bbA$}
edge[<-,shorten <= 1.5pt,thick] (bulk2);

\tkzDrawCircle(O,bdry1)
  \tkzDrawPoints[color=black,fill=black,size=12](bulk1,bulk2,bdry1,bdry2)

\end{tikzpicture}
\caption{}
\label{fig:diagram2}
\end{subfigure}
\caption{Graphs with 2 interaction vertices. A bullet denotes a point we integrate over, long arrow denotes a propagator.}
\label{fig:Diagrams2}
\end{figure}
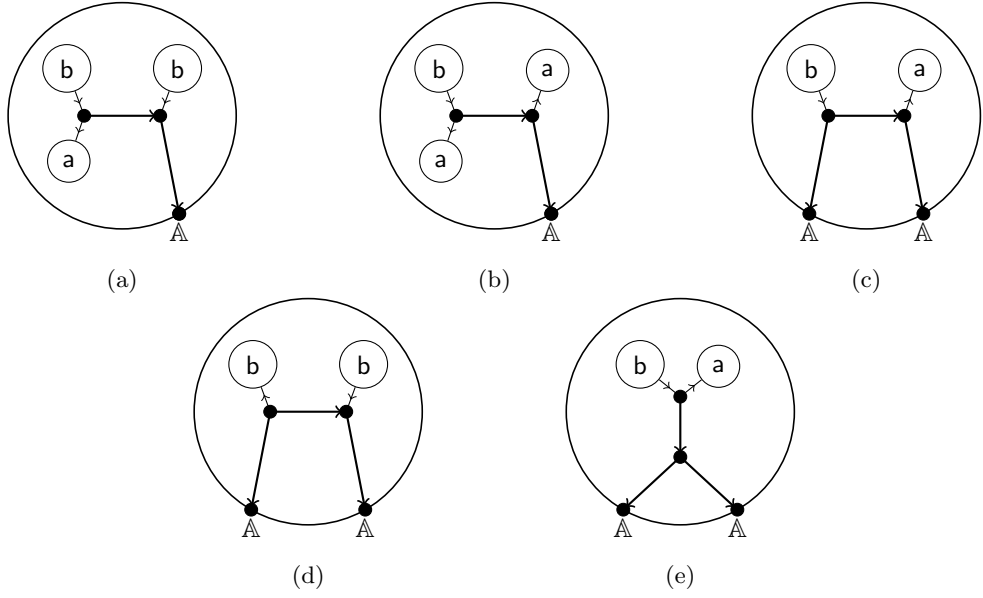
We will discuss the results below.

\subsubsection{Performing integration over M}\label{Integration}
We now want to perform the integration over the bulk points. There are two possibilities to proceed: 
\begin{enumerate}
\item One constructs an explicit propagator on $M$ and computes the integrals analytically.
\item One analyses how the resulting form on the boundary behaves under de Rham differential and integration of points, and picks a form which is a product of propagators and representatives of cohomology on the boundary that has the same properties. Since only these properties enter into the proof of the mQME, this produces a valid state. We will discuss this procedure and the question of uniqueness in more depth in a future paper. 
\end{enumerate}

With the second approach, choosing a propagator satisfying also \ref{Koiequalzero} and \ref{KoKequalzero}, one can see that the only contributions from two-point diagrams \ref{fig:diagram1} and \ref{fig:diagram2}. Denoting the results by $\Seff_4$ and $\Seff_5$ respectively, we obtain
\begin{align*}
 \Seff_0 &= -z_{1,k}^+\int_{\de_1M}\bbA^k -z_{2,k}^+\int_{\de_1M}dt\bbA^k,\\
\Seff_1 &= \frac{1}{2}g_i^{jk}(z^{1i}z^+_{1j}z^+_{1k}+2z^{2i}z^+_{1j}z^+_{2k}),\\
\Seff_2  &= f^i_{jk}z_{1i}^+z^{2j}\int_{\de_1M}d\theta\bbA^k+ f^i_{jk}(z^+_{1i}z^{1j}-z^+_{2i}z^{2j})\int_{\de_1M} dtd\theta \bbA^k,\\
\Seff_3 &= \frac{1}{2}f^i_{jk}z_{1i}^+\int_{C_2(\de_1M)}\eta^T_{12}\bbA^j_1\bbA^k_2, \\ &+\frac{1}{2}f^i_{jk}z_{2i}^+ \int_{C_2(\de_1M)}\eta^T_{12}\frac{dt_1+dt_2}{2}\bbA^j_1\bbA^k_2, \\
\Seff_4 &= f^i_{jk}f^j_{lm}z_{1i}^+z^{2l}\int_{C_2(\de_1M)}d\theta_1
\eta^T_{12}\bbA^k_1\bbA^m_2 +f^i_{jk}f^j_{lm}(z^+_{1i}z^{1l}-z^+_{2i}z^{2l}) \int_{C_2(\de_1M)}dt_1d\theta_1\eta^T_{12}\bbA^k_1\bbA^k_m, \\
\Seff_5 &= f^i_{jk}f^j_{lm}z_{1i}^+z^{2j}\int_{C_2(\de_1M)}d\theta_1
\eta^T_{12}\bbA^l_1\bbA^m_2 +f^i_{jk}f^j_{lm}(z^+_{1i}z^{1j}-z^+_{2i}z^{2j}) \int_{C_2(\de_1M)}dt_1d\theta_1\eta^T_{12}\bbA^l_1\bbA^m_2,
\end{align*}

where $t$ denotes the parallel (longitudinal) coordinate on the torus, $\theta$ the meridian (i.e. in the solid torus $[\dr\theta] = 0$) and $\eta^T$ is a propagator for abelian BF theory on the torus. 
\subsection{mQME}
Our goal in this Section is to prove  the modified Quantum Master Equation $$(\hbar^2\Delta + \Omega)e^{\frac{i}{\hbar}S_{\text{eff}}} = 0,$$
ignoring terms of order $\hbar$, more than two boundary vertices or more than second power in the interaction. Here $\Omega$ is given by the standard quantisation of $$S^{\de} = \int_{\de M} \langle \sfB, \dr\sfA \rangle + \frac{1}{2}\langle \sfB,[\sfA,\sfA]\rangle +\frac{1}{2}\langle \sfA,[\sfB,\sfB]\rangle, $$ 
which (on the solid torus) is 
\begin{align*}
\Omega_{\text{st}} &= -\ii\hbar\int_{\de_1M} \dr \bbA^k  \frac{\delta}{\delta \bbA^k} +\frac{1}{2}g_a^{bc}\int_{\de_1M}-\hbar^2 \bbA^a \frac{\delta}{\delta \bbA^b}\frac{\delta}{\delta \bbA^c}  
+\frac{1}{2}f^a_{bc}\int_{\de_1M}-\ii\hbar\bbA^b\bbA^c\frac{\delta}{\delta\bbA^a}.
\end{align*}
\begin{rem} The second term containing two derivatives yields possibly singular results when applied to a single term in the effective action. Therefore the two derivatives are allowed to act only on different terms in a product of terms of the effective action. With this regularisation one can also check that $\Omega_{st}^2 = 0$.
\end{rem}
One can check that $\Delta S_{\text{eff}} = 0$ and therefore $(\hbar^2\Delta + \Omega)e^{\frac{i}{\hbar}\Seff}  = -\frac{1}{2}(\Seff ,\Seff)e^{\frac{\ii}{\hbar}\Seff} + \Omega e^{\frac{\ii}{\hbar}\Seff}. $ 
So we should check that $\frac{1}{2}(\Seff ,\Seff)e^{\frac{\ii}{\hbar}\Seff} = \Omega e^{\frac{\ii}{\hbar}\Seff}$ up to higher order corrections. \\
\subsubsection{BV Bracket}
Let us compute first $(\Seff ,\Seff)$. Abbreviating $\Seff_i=:S_i$, we get that 
$(\Seff ,\Seff) = \sum_i (S_i,S_i) + 2\sum_{i<j} (S_i,S_j)$. \\
We have that $(z^+_{1i},z^{1j}) = \delta_{ij} = -(z^+_{2i},z^{2j})$, and all other brackets vanish. \\
Since $S_0$ and $S_3$ only contain $z^+$ variables, we get that $(S_0,S_0) = (S_3,S_3) = (S_0,S_3) = 0$. Also, $(S_2,S_3)$ contains three boundary fields, so we neglect it. The same is true for any bracket of $S_4$ with the rest, except $(S_1,S_4)$, which is third power in the structure constants. So the only contributing brackets are $(S_0,S_1),(S_0,S_2)$, 
 $(S_1,S_1),(S_1,S_2),(S_1,S_3)$ and $(S_2,S_2)$. 
 \subsubsection{$\Omega$ part}
Now let us compute $\Omega_{\text{st}}e^{\frac{\ii}{\hbar}\Seff}.$ At first, we will consider only contributions of order 0 in $\hbar$ and less than two $\bbA's$. Let us split $\Omega$ into the following 3 terms: 
\begin{align*}
\Omega_{0} &:= -\ii\hbar\int_{\de_1M} \dr \bbA^k  \frac{\delta}{\delta \bbA^k}, \\ 
\Omega_1 &:= -\frac{\ii\hbar}{2}f^a_{bc}\int_{\de_1M}\bbA^b\bbA^c\frac{\delta}{\delta\bbA^a}, \\
\Omega_2 &:=-\frac{\hbar^2}{2}g_a^{bc}\int_{\de_1M} \bbA^a \frac{\delta}{\delta \bbA^b}\frac{\delta}{\delta \bbA^c}. \\
\end{align*}
By the usual rules of derivatives we will have 
$$\Omega_{st}e^{\frac{\ii}{\hbar}\Seff}= \left(\left(\Omega_0+\Omega_1\right)\frac{i}{\hbar}\Seff+\Omega_2\left(\frac{i}{\hbar}\right)^2\frac{1}{2}(\Seff)^2\right)e^{\frac{\ii}{\hbar}\Seff}.$$
Let us look at the linear term first. Notice that $\Omega_0(S_0) = \Omega_0(S_1) = \Omega_0(S_2) = 0$, since we can integrate by parts, and the forms appearing in these integrals are closed. Also, since we are ignoring terms with more than two boundary fields, and $\Omega_1(S_1)=0$, we only need to consider $\Omega_1(S_0)$ and $\Omega_1(S_2)$. Now we need to consider $\Omega_2 \left(\frac{\ii}{\hbar}\right)^2\frac{1}{2!}(\Seff)^2$. Since $\Omega_2$ removes one $\bbA$, but adds one power in the interaction, we have to consider terms in $(\Seff)^2$ with two or three $\bbA$'s and at most first power in the the interaction. One can easily check that the only products to consider are $S_0^2,S_0S_2$ and $S_0S_3$. We therefore arrive at the following 
\begin{prop}  
To prove the mQME in the chosen degrees one can equivalently prove that 
\begin{align}(S_0,S_1)&+(S_0,S_2)+\frac{1}{2}(S_1,S_1)+(S_1,S_2)+(S_1,S_3)+\frac{1}{2}(S_2,S_2) \\
&= \frac{i}{\hbar}\left(\Omega_0(S_3) + \Omega_0(S_4) + \Omega_0(S_5) + \Omega_1(S_0)+\Omega_1(S_2)\right)+\frac{1}{2}\left(\frac{i}{\hbar}\right)^2\Omega_2(S_0^2 + 2S_0S_2 + 2S_0S_3).\end{align} \label{redmQME}
\end{prop}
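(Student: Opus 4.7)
The plan is to prove the proposition by termwise bookkeeping: since $\Delta\Seff=0$, the mQME $(\hbar^2\widehat\Delta+\widehat\Omega)e^{\frac{\ii}{\hbar}\Seff}=0$ is equivalent to $\frac{1}{2}(\Seff,\Seff)\,e^{\frac{\ii}{\hbar}\Seff}=\Omega_{\text{st}}\,e^{\frac{\ii}{\hbar}\Seff}$. I then want to expand both sides using $\Seff=\sum_{i=0}^{5}S_i$ and the splitting $\Omega_{\text{st}}=\Omega_0+\Omega_1+\Omega_2$, and show that, after discarding everything that violates the truncation (order $\hbar^{\geq 1}$ beyond what is written, more than two boundary $\bbA$'s, more than cubic in structure constants), the only surviving contributions are precisely those appearing in the displayed identity.

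First I would handle the left-hand side. Expanding $(\Seff,\Seff)=\sum_{i,j}(S_i,S_j)$, I use the elementary bracket data $(z^+_{1i},z^{1j})=\delta_{ij}=-(z^+_{2i},z^{2j})$, all other brackets zero. Since $S_0$ and $S_3$ contain only $z^+$-variables, $(S_0,S_0)=(S_3,S_3)=(S_0,S_3)=0$ automatically. The pair $(S_2,S_3)$ has three boundary fields and is discarded; $(S_4,S_j)$ and $(S_5,S_j)$ either exceed the boundary-field budget or yield terms cubic in structure constants (including $(S_1,S_4)$), so they are also discarded. What remains is exactly $(S_0,S_1)+(S_0,S_2)+\tfrac{1}{2}(S_1,S_1)+(S_1,S_2)+(S_1,S_3)+\tfrac{1}{2}(S_2,S_2)$.

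Next I would expand the right-hand side. For the first-order pieces $\Omega_0,\Omega_1$ one has $\Omega_k e^{\frac{\ii}{\hbar}\Seff}=\frac{\ii}{\hbar}\Omega_k(\Seff)\,e^{\frac{\ii}{\hbar}\Seff}$. Integration by parts plus closedness of the integrands in $S_0,S_1,S_2$ gives $\Omega_0(S_0)=\Omega_0(S_1)=\Omega_0(S_2)=0$, leaving $\Omega_0(S_3)+\Omega_0(S_4)+\Omega_0(S_5)$. For $\Omega_1$, which adds one boundary field and must not exceed the budget, only $\Omega_1(S_0)$ and $\Omega_1(S_2)$ survive (and $\Omega_1(S_1)=0$ by type). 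For the second-order piece $\Omega_2$ I invoke the regularisation of the remark: the two derivatives act on distinct factors of a product in the expansion of $e^{\frac{\ii}{\hbar}\Seff}$. So $\Omega_2 e^{\frac{\ii}{\hbar}\Seff}=\tfrac12(\tfrac{\ii}{\hbar})^2\Omega_2(\Seff^2)\,e^{\frac{\ii}{\hbar}\Seff}$ with the cross-differentiation rule. Within $\Seff^2$ one then restricts to pairs $S_iS_j$ which, after $\Omega_2$ removes two $\bbA$'s and reintroduces one, produce at most two boundary $\bbA$'s and at most two interaction vertices; an inventory shows only $S_0^2$, $2S_0S_2$ and $2S_0S_3$ qualify.

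Assembling the surviving LHS and RHS pieces produces exactly the equation claimed in the proposition. The main obstacle I anticipate is the bookkeeping itself, in particular checking carefully that every discarded contribution violates one of the stated truncations and that the combinatorial factors from the regularised $\Omega_2$ acting on the symmetric products $S_iS_j$ match the coefficients $S_0^2+2S_0S_2+2S_0S_3$. This is however strictly a combinatorial and order-counting exercise; no new input is needed beyond the BV bracket table, the form of $\Omega_{\text{st}}$ on the solid torus, and the closedness properties of the integrands that were established in Section~\ref{Integration}.
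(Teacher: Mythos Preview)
Your proposal is correct and follows essentially the same approach as the paper: the paper's ``proof'' of this proposition is precisely the bookkeeping carried out in the two paragraphs preceding it (subsections on the BV bracket and the $\Omega$ part), and you reproduce that argument step by step---same vanishings, same truncation criteria, same surviving terms on both sides. The only quibble is a minor wording slip in your $\Omega_2$ count (since $\Omega_2$ itself carries a factor $g$, the input product $S_iS_j$ must be at most \emph{first} power in the interaction, not second, to stay within the truncation), but your final inventory $S_0^2+2S_0S_2+2S_0S_3$ is exactly right.
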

This can be shown in a direct computation, which we as follows.
\begin{lem}\label{lemma1}
The following identities hold: 
\begin{enumerate}[i)]
\item $(S_0,S_1) = \frac{1}{2}\left(\frac{i}{\hbar}\right)^2\Omega_2(S_0^2),$
\item $(S_1,S_1) = 0,$
\item $(S_0,S_2) = \frac{i}{\hbar}\left(\Omega_0(S_3) + \Omega_1(S_0)\right),$
\item $(S_1,S_2) = \left(\frac{i}{\hbar}\right)^2\Omega_2(S_0S_2),$
\item $(S_1,S_3) = \left(\frac{i}{\hbar}\right)^2\Omega_2(S_0S_3),$
\item $(S_2,S_2) = \frac{i}{\hbar}\left(\Omega_0(S_4) + \Omega_0(S_5) + \Omega_1(S_2)\right).$
\end{enumerate}
\end{lem}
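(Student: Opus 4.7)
The plan is to verify each of the six identities by direct computation, using the explicit expressions for $S_0,\dots,S_5$ displayed in Section \ref{Integration}, together with the basic BV brackets $(z^+_{1i},z^{1j})=\delta_{ij}=-(z^+_{2i},z^{2j})$ and the three pieces $\Omega_0,\Omega_1,\Omega_2$ of the boundary quantisation. The structural observation is that each identity pairs a BV bracket whose cotangent derivatives produce either a $g$-tensor or an $f$-tensor with a term on the right hand side that produces the same tensor either by a second $\delta/\delta\bbA$ (the $\Omega_2$ terms, bringing down a factor of $g$) or by applying $\Omega_0$ to a diagram containing a propagator $\eta^T$ and then using Stokes on $\T^2$ to replace $\dr\eta^T$ by the appropriate boundary cohomology representative.

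Identities i), iv), and v) are of the first type and are essentially algebraic. For i) one contracts the $z^+$ in $S_0$ against the $z^1, z^2$ pairs in $S_1$, yielding the $g^{jk}_i$-coefficients times two copies of a boundary integral; the right hand side produces exactly the same contractions because $\Omega_2$ applied to $S_0^2$ pulls out $g^{bc}_a$ together with the product $(z^+_{1b})(z^+_{1c})$ or $(z^+_{1b})(z^+_{2c})$. The calculation for iv) and v) is analogous: the bracket puts a $z^1$ or $z^2$ factor from $S_1$ on the $z^+$ of $S_2$ or $S_3$, which matches $\Omega_2$ acting distributively on one $S_0$ factor in the product, leaving the other $S_2$ or $S_3$ untouched. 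Identity ii) is symmetry-based: the self-bracket of $S_1$ is a sum of terms of the schematic form $g^{ab}_c g^{de}_f z^{\ast}z^{\ast}z^+z^+$, and pairing the explicit coefficients one checks that they cancel in pairs. All of this is routine bookkeeping of indices and signs.

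Identity iii) is where the propagator enters for the first time. The bracket $(S_0,S_2)$ contracts the $z^+$ in $S_0$ against the $z^1,z^2$ in $S_2$, producing a boundary integral involving a bare $\bbA$ and the factor $\dr t\,\dr\theta$. On the right hand side, $\Omega_1(S_0)$ produces $f^a_{bc}\bbA^b\bbA^c$ terms directly, while $\Omega_0(S_3)$ acts by $\dr\bbA\,\delta/\delta\bbA$ and, after integration by parts on $C_2(\de_1M)$, replaces $\dr\eta^T_{12}$ by its boundary value; the latter is the sum of products of representatives of $H^\bullet(\T^2)$ dictated by \eqref{DifferentialOfPorp}. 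Balancing these contributions against the bracket term is a Stokes-plus-index-matching exercise.

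The main obstacle is identity vi). Here $(S_2,S_2)$ produces a sum of terms proportional to $f^i_{jk}f^j_{lm}$ with two powers of $\bbA$, and the right hand side consists of $\Omega_0(S_4)+\Omega_0(S_5)$ (again invoking $\dr\eta^T$ via Stokes on $C_2(\de_1M)$) together with $\Omega_1(S_2)$ (which brings in $f^a_{bc}\bbA^b\bbA^c$). To match these one must recognise that the discrepancy between the "$f\!f$" contractions produced by the bracket and those produced by $\Omega$ is precisely the Jacobi identity on $V$ and $W$ combined with the compatibility \eqref{bialgebra}; this is the only identity in the list whose proof actually uses the Manin triple structure beyond isotropy. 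I expect the bookkeeping of the four boundary integrals in $S_4$ and $S_5$ (with their different combinations of $\dr t$ and $\dr\theta$ insertions) to be the most delicate step, and once these are reorganised and Stokes is applied the equation reduces to \eqref{bialgebra} paired against $\eta^T\wedge\bbA\wedge\bbA$.
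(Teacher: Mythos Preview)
Your approach is the same as the paper's: the paper states the lemma and says only that it ``can be shown in a direct computation,'' giving no further details, so your plan to verify each identity by explicit index bookkeeping with the BV brackets on $z,z^+$ and the three pieces $\Omega_0,\Omega_1,\Omega_2$ is exactly what is intended.

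One small correction to your expectations for item vi): every term appearing in $(S_2,S_2)$, in $\Omega_0(S_4+S_5)$, and in $\Omega_1(S_2)$ carries only the $V$-structure constants $f^\bullet_{\bullet\bullet}$; the $W$-constants $g$ do not enter on either side, so the compatibility relation \eqref{bialgebra} cannot be what closes the identity. At most you will need the Jacobi identity for $V$ when reorganising the $f\cdot f$ contractions (and possibly not even that, depending on how the $\dr\eta^T$ contributions from $\Omega_0$ line up). The place where a nontrivial structure-constant identity is more likely to be needed is item ii), where $(S_1,S_1)$ is a $g\cdot g$ expression; your claim that the terms ``cancel in pairs'' by symmetry alone should be checked against the Jacobi identity for $W$.
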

\begin{cor}
The state defined by $\widehat{\psi} = e^{\frac{i}{\hbar}\Seff}$ satisfies the mQME on the solid torus at zeroth order in $\hbar$, considering terms with at most two boundary fields and at most second order in the interaction. 
\end{cor}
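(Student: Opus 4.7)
The plan is to chain together the two preceding results. Proposition~\ref{redmQME} has already reduced the mQME, truncated to zeroth order in $\hbar$ and to at most two boundary fields and second power in the interaction, to a single explicit identity between six BV-brackets on one side and the images of $\Omega_0, \Omega_1, \Omega_2$ on specific products of the $S_j$ on the other. Lemma~\ref{lemma1} supplies exactly the six matching pieces, so summing identities (i)--(vi) with factors of $\tfrac12$ in front of (ii) and (vi) reproduces Proposition~\ref{redmQME} line by line, and the corollary follows.

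In more detail, I would first invoke the observation preceding Proposition~\ref{redmQME} that $\Delta \Seff = 0$: each residual-field coordinate appears linearly in every $S_j$, so the mixed derivatives comprising $\Delta_{\mathcal{V}_M}$ kill $\Seff$. This turns $(\hbar^2\Delta + \Omega_{\text{st}})e^{\frac{i}{\hbar}\Seff} = 0$ into $-\tfrac{1}{2}(\Seff,\Seff) + \Omega_{\text{st}} \Seff + \tfrac{i}{\hbar}\tfrac{1}{2}\Omega_2\bigl((\Seff)^2\bigr) = 0$ modulo higher-order corrections. I would then truncate each side in the declared degrees: on the bracket side, only the six pairs listed in Proposition~\ref{redmQME} can contribute, since all others either carry too many boundary fields or exceed the permitted power of structure constants; on the $\Omega$ side, $\Omega_0$ annihilates $S_0, S_1, S_2$ (each is an integral of a closed form on the closed torus $\de_1 M$), $\Omega_1$ annihilates $S_1$, which has no $\bbA$-dependence, and the only products on which $\Omega_2$ can act without leaving the truncation are $S_0^2, S_0S_2$, and $S_0S_3$, accounting exactly for the right-hand side of Proposition~\ref{redmQME}.

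The real content is Lemma~\ref{lemma1}, and the expected main obstacle is identity (vi): computing $(S_2, S_2)$ produces a combination of two $V$-structure constants of the shape $f^i_{jk}f^j_{lm}$, and it is the Manin-triple compatibility~\eqref{bialgebra} that is needed to trade this for the boundary contributions $\Omega_0(S_4) + \Omega_0(S_5) + \Omega_1(S_2)$. Identities (i), (iv), (v) rely on the propagator property~\eqref{DifferentialOfPorp} combined with integration by parts on $C_2(\de_1 M)$ and the zero-mode conditions~\eqref{Koiequalzero},~\eqref{KoKequalzero}; identities (ii) and (iii) follow directly from isotropy of $V$ and $W$. With all six in hand the sum collapses to zero, proving the corollary.
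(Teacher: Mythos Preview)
Your overall strategy is exactly the paper's: the corollary is immediate once you sum the six identities of Lemma~\ref{lemma1} (with the coefficients $\tfrac12$ on (ii) and (vi)) to recover the single equation of Proposition~\ref{redmQME}. On that level there is nothing to add.

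Your third paragraph, however, misattributes the mechanisms behind several of the lemma's items. Identity~(vi) involves only $V$--structure constants on both sides: $(S_2,S_2)$ is built from $f\cdot f$, and so are $\Omega_0(S_4)$, $\Omega_0(S_5)$ and $\Omega_1(S_2)$. The Manin--triple compatibility~\eqref{bialgebra} relates products $f\cdot g$ and therefore cannot be the relevant input here; what enters is rather Stokes' theorem on $C_2(\de_1M)$ together with the differential of the boundary propagator $\eta^T$, and at most the Jacobi identity for $V$. Conversely, identity~(iii) is \emph{not} a consequence of isotropy: $\Omega_0(S_3)$ produces $\dr\eta^T$, and matching it with $(S_0,S_2)$ uses precisely the analogue of~\eqref{DifferentialOfPorp} on the boundary, not any property of the pairing. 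Identity~(i) involves no propagator at all (neither $S_0$ nor $S_1$ contains one), so~\eqref{DifferentialOfPorp}--\eqref{KoKequalzero} play no role there; it is a direct algebraic check in the $z,z^+$ variables. These mislabelled ingredients do not invalidate your proof of the corollary itself, since you may simply cite Lemma~\ref{lemma1}, but they would lead you astray if you tried to carry out the lemma's computations along the lines you sketch.
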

\subsection{Change of data}
Now we will analyse how the state behaves under an infinitesimal change of gauge-fixing, i.e. the representatives of cohomology and the propagator. Such a change can be described by the action of a vector field $X$ on $M$ on these forms by the Lie derivative 
$$\dot{\chi}_i = L_X\chi_i, \dot{\chi}^i = L_X\chi^i, \dot{\eta} = L_X\eta$$ 
(we will always write $X$ to mean the vector field $(X,\ldots,X) \in TM\oplus \cdots \oplus TM \cong T(M\times\cdots\times M)$). Clearly we have 
$$ \frac{d}{dt}\widehat{\psi} = \frac{i}{\hbar}\frac{d}{dt}(\Seff)e^{\frac{i}{\hbar}\Seff}.$$ 
\begin{prop}\label{changeofdata}
If we expand $\Seff$ as a sum of terms of the form 
$$\Seff=\sum\int_{C_n(\de_1M)}\gamma\pi^*_1\bbA\cdots\pi^*_n\bbA,$$ then its time derivative is given by 
$$\frac{d}{dt}(\Seff)=\sum\int_{C_n(\de_1M)}(L_{X^{\de}}\gamma)\pi^*_1\bbA\cdots\pi^*_n\bbA,$$
where $X^{\de}$ denotes restriction of $X$ to the boundary. 
\end{prop}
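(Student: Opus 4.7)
The plan is to reduce the claim to an identity for each Feynman diagram contribution to $\Seff$ separately, and then to exploit the naturality of the Lie derivative under fiber integration.

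First, by linearity it suffices to prove the statement one Feynman diagram at a time. For a diagram $\Gamma$ with $n$ boundary vertices coupled to $\bbA$ and $k$ bulk (interaction) vertices, the Feynman rules of Section 3 produce a contribution of the form
\[
\int_{C_{k,n}(M,\de M)} \tilde\gamma_\Gamma\;\pi_1^*\bbA\cdots\pi_n^*\bbA,
\]
where $\tilde\gamma_\Gamma$ is a product of pullbacks of propagators $\eta$ (one per internal edge) and of cohomology representatives $\chi_i,\chi^i$ (one per leaf), multiplied by constant prefactors coming from structure constants and residual-field variables. Fiber integration along the projection $\pi\colon C_{k,n}(M,\de M)\to C_n(\de_1 M)$ that forgets the bulk points yields $\gamma_\Gamma=\pi_*\tilde\gamma_\Gamma$, the coefficient that appears in the statement. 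Since the boundary fields $\bbA$ are independent of the gauge-fixing data, the time derivative only acts on $\gamma_\Gamma$, so it suffices to prove $\dot\gamma_\Gamma=L_{X^{\de}}\gamma_\Gamma$ for each $\Gamma$.

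Next I would compute $\dot{\tilde\gamma}_\Gamma$ using the Leibniz rule. Each factor in $\tilde\gamma_\Gamma$ is the pullback of $\eta$, $\chi_i$, or $\chi^i$ by a coordinate projection, and by hypothesis the deformation acts by $L_X$ on each such form. Using the standard intertwining $L_{\widehat X}\pi^*=\pi^* L_X$ for $\widehat X$ the diagonal vector field on the configuration space (which is $\pi$-related to $X$ on each factor), Leibniz for Lie derivatives gives $\dot{\tilde\gamma}_\Gamma=L_{\widehat X}\tilde\gamma_\Gamma$. The heart of the proof is then to establish
\[
\pi_*\,L_{\widehat X}=L_{\widehat X^{\de}}\,\pi_*,
\]
where $\widehat X^{\de}$ is the diagonal vector field on $C_n(\de_1 M)$ built from $X^{\de}=X|_{\de M}$. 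The vanishing boundary conditions on $\eta$ force $X$ to be tangent to $\de M$, so $X^{\de}$ is well-defined and the flow $\phi_t^X$ on $M$ preserves $\de M$. The induced diagonal flow $\widehat\phi_t$ on $C_{k,n}(M,\de M)$ then satisfies $\pi\circ\widehat\phi_t=\widehat\phi_t^{\de}\circ\pi$, and the change-of-variables formula for fiber integration yields $\pi_*\widehat\phi_t^{\,*}=(\widehat\phi_t^{\de})^*\pi_*$; differentiating at $t=0$ produces the required identity. Combining the two steps gives $\dot\gamma_\Gamma=L_{X^{\de}}\gamma_\Gamma$.

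The principal technical obstacle is that $C_{k,n}(M,\de M)$ is a compactified configuration space with corners (of Fulton--MacPherson type), so fiber integration must be controlled at boundary strata where bulk points collide with each other or approach boundary points; a priori this can produce extra terms akin to those in Stokes' formula for fiber integration. The rescue is that $\widehat\phi_t$ preserves the stratification (because $\phi_t^X$ is a diffeomorphism of $M$ preserving $\de M$ setwise), so no additional stratum contributions arise and the naturality argument passes to the compactification intact. A secondary subtlety, that a change of basis $\chi_i,\chi^i$ alters the identification of the residual-field coordinates $z^i,z^+_i$, is absorbed by simultaneously redefining those coordinates so that the constant prefactors in $\tilde\gamma_\Gamma$ contribute no additional term to the derivative.
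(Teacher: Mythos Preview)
Your proof is correct and follows the same two-step strategy as the paper: use the Leibniz rule to identify the time derivative of the integrand with a Lie derivative of the product of propagators and cohomology representatives, then commute this Lie derivative with the fiber integration over the bulk vertices. The paper's own argument is considerably terser---it simply asserts that ``the Lie derivative commutes with integration over the bulk vertices'' without the flow/change-of-variables justification or the discussion of boundary strata---so your version supplies details the paper leaves implicit.
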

\begin{proof}
$\Seff$ is a sum of terms of the form 
$$\int_{C_{m,n}(M,\de_1M)}\widehat{\gamma}\pi^*_1\bbA\cdots\pi^*_n\bbA,$$ where $\widehat{\gamma}$ is a product of background fields and propagators on $M$. Since $L_X$ is a derivation, we have $\frac{d}{dt}\widehat{\gamma} = L_X\widehat{\gamma}$. But the Lie derivative commutes with the integration over the bulk vertices, so we have proved the statement.  
\end{proof}
We are now going to define a state $\zeta$ such that $$(\hbar^2\Delta + \Omega)(\widehat{\psi}\zeta) = \frac{d}{dt}\widehat{\psi}$$
for our example on the torus. Namely, we define $\gamma_i \in \Omega^{k_i}(C_{n_i}(\de_1M))$ by 
$$ \Seff= \sum_i F_i(f,g,z,z^+)_{j_1\cdots\j_{n_i}}\int_{C_{n_i}(\de_1M)}\gamma_i\pi^*_1\bbA^{j_1}\cdots\pi^*_{n_i}\bbA^{j_{n_i}}.$$
Then $\zeta$ is defined by 
$$ \zeta = \sum_iF_i(f,g,z,z^+)_{j_1\cdots\j_{n_i}} \int_{C_{n_i}(\de_1M)}(\iota_{X^{\de}}\gamma_i)\pi^*_1\bbA^{j_1}\cdots\pi^*_{n_i}\bbA^{j_{n_i}} ,$$
i.e. we replace every differential form $\gamma_i$ by its contraction with $X$. 
\begin{prop}
For the change of data described above and the effective action described in the last paragraph, we have that 
$$(\hbar^2\Delta + \Omega)(\widehat{\psi}\zeta) = \frac{d}{dt}\widehat{\psi}$$
at zeroth order in $\hbar$, considering only terms of at most two boundary fields and at most second power in the interaction. 
\end{prop}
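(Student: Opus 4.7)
The plan is to mimic the argument for the preceding Corollary on the mQME, now tracking the extra factor $\zeta$. First I would expand $(\hbar^2\Delta+\Omega)(\widehat{\psi}\zeta)$ using the Leibniz rule for the first-order pieces $\Omega_0,\Omega_1$ and the explicit second-order formulas for $\Omega_2$ and $\Delta$. This produces three groups of contributions: (a) $\bigl((\hbar^2\Delta+\Omega)\widehat{\psi}\bigr)\zeta$, which vanishes by the Corollary in the specified truncation; (b) $\widehat{\psi}(\hbar^2\Delta+\Omega)\zeta$; and (c) the second-order cross terms of the schematic form $\bbA\cdot(\delta\widehat{\psi}/\delta\bbA)(\delta\zeta/\delta\bbA)$ from $\Omega_2$, together with cross terms from $\Delta$ acting on the residual-field coefficients $F_i(f,g,z,z^+)$.

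The central tool will be Cartan's magic formula $L_{X^{\de}} = d\iota_{X^{\de}} + \iota_{X^{\de}} d$. For each summand $\int_{C_{n_i}(\de_1M)} (\iota_{X^{\de}}\gamma_i)\,\bbA\cdots\bbA$ of $\zeta$, the action of $\Omega_0$ followed by integration by parts on $C_{n_i}(\de_1M)$ yields $\int d(\iota_{X^{\de}}\gamma_i)\,\bbA\cdots\bbA = \int (L_{X^{\de}}\gamma_i)\,\bbA\cdots\bbA - \int \iota_{X^{\de}}(d\gamma_i)\,\bbA\cdots\bbA$. Summing the first piece over $i$, together with the common factor $\widehat{\psi}$ and the prefactor of $\ii/\hbar$, reproduces exactly $\frac{d}{dt}\widehat{\psi}$ by Proposition~\ref{changeofdata}, which is the desired right-hand side.

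The remaining task is to show that the ``leftover'' contributions, namely the $\iota_{X^{\de}}(d\gamma_i)$ terms from $\Omega_0(\zeta)$ together with $\Omega_1(\zeta)$, $\Delta(\zeta)$, and the cross terms in (c), sum to zero. My strategy is to apply $\iota_{X^{\de}}$ to both sides of each of the identities (i)--(vi) in Lemma~\ref{lemma1}: the mQME there expresses $d\gamma_i$ as a sum of products $\gamma_j\wedge\gamma_k$ of propagators and representatives of cohomology, with coefficients polynomial in $f,g,z,z^+$. Since $\iota_{X^{\de}}$ is an odd derivation, $\iota_{X^{\de}}(\gamma_j\wedge\gamma_k) = (\iota_{X^{\de}}\gamma_j)\gamma_k \pm \gamma_j(\iota_{X^{\de}}\gamma_k)$, and the distributed right-hand side matches precisely the output of $\Omega_1(\zeta)$ and of the $\Omega_2$/$\Delta$ cross-term Leibniz rule applied to $\widehat{\psi}\zeta$.

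The main obstacle I anticipate is sign and combinatorial bookkeeping: one must track the Koszul sign produced when $\iota_{X^{\de}}$ is moved past factors in $\gamma_j\wedge\gamma_k$ of various form and ghost degrees, and ensure that the factor of $\tfrac{1}{2}$ carried by diagonal brackets $(S_i,S_i)$ is correctly balanced against the factor of $2$ produced when $\iota_{X^{\de}}$ may act on either copy of a repeated $\gamma$. Since $\iota_{X^{\de}}$ does not change the number of boundary fields or of interaction vertices, the truncation ``at most two boundary fields and at most second power in the interaction'' is preserved automatically, so the verification reduces to a finite identity-by-identity check mirroring Lemma~\ref{lemma1}.
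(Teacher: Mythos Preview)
Your overall architecture matches the paper's: Leibniz-expand $(\hbar^2\Delta+\Omega)(\widehat\psi\,\zeta)$, kill group (a) by the mQME, use Cartan's formula on $\Omega_0(\zeta)$ to extract the $L_{X^\partial}\gamma_i$ piece giving $\tfrac{d}{dt}\widehat\psi$, and cancel the remainder against the cross terms. Two points deserve attention.

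First, there is a genuine omission. When you integrate $\Omega_0(\zeta)$ by parts on $C_{n_i}(\partial_1 M)$ you do \emph{not} just get $\int d(\iota_{X^\partial}\gamma_i)\,\bbA\cdots\bbA$: the compactified configuration space has boundary strata where points collide, and these contribute extra terms. The paper isolates these and shows they are cancelled precisely by $\Omega_1(\zeta)$, so that $\Omega_0(\zeta)+\Omega_1(\zeta)$ together replace $\iota_{X^\partial}\gamma_i$ by $d\iota_{X^\partial}\gamma_i$ cleanly. In your grouping you have placed $\Omega_1(\zeta)$ among the ``leftovers'' to be cancelled by the Lemma~\ref{lemma1} mechanism, but without the boundary strata terms the bookkeeping will not balance.

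Second, the paper does not literally contract Lemma~\ref{lemma1} with $\iota_{X^\partial}$; it records a separate list of identities (Lemma~\ref{lemma2}) of the form $\Omega_2(S_i\,\iota_{X^\partial}S_j)+\Omega_2(S_j\,\iota_{X^\partial}S_i)=(S_1,\iota_{X^\partial}S_k)$ and $(S_i,\iota_{X^\partial}S_j)+(S_j,\iota_{X^\partial}S_i)=\iota_{X^\partial}dS_k$, which directly exhibit the cross terms $(\Seff,\zeta)$ and $(\Seff\zeta)'$ as producing exactly the $\iota_{X^\partial}d\gamma$ pieces. Your idea of obtaining these by applying $\iota_{X^\partial}$ as a derivation to Lemma~\ref{lemma1} is morally right and would reproduce Lemma~\ref{lemma2}, but it is only well defined once you fix the convention that $\iota_{X^\partial}$ acts on the form $\gamma$ and commutes with $\Omega_1,\Omega_2$ and the BV bracket; in particular you must check how it interacts with the $\Omega_0$ terms on the right-hand side of Lemma~\ref{lemma1}, which is exactly where the boundary-stratum issue reappears. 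Once you add the missing boundary contributions and make this commutation explicit, your argument and the paper's coincide.
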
 
\begin{proof}[Sketch of the proof]
We have that $$\Delta((\widehat{\psi}\zeta)) = \Delta(\widehat{\psi})\zeta \pm \widehat{\psi}\Delta(\zeta) \pm (\psi,\zeta) = \Delta(\widehat{\psi})\zeta  \pm (\psi,\zeta),$$ 
since $\Delta(\zeta) = 0$. On the other hand, using that $\Omega_0$ and $\Omega_1$ are first-order differential operators and $\Omega_2$ is a second-order differential operator,
\begin{align*}
\Omega(\widehat{\psi}\zeta) &= \Omega_0(\widehat{\psi}\zeta) + \Omega_1(\widehat{\psi}\zeta) + \Omega_2(\widehat{\psi}\zeta) \\
&= \Omega_0(\widehat{\psi})\zeta + \widehat{\psi}\Omega_0(\zeta) + \Omega_1(\widehat{\psi})\zeta + \widehat{\psi}\Omega_1(\zeta) + \Omega_2(\widehat{\psi})\zeta + \widehat{\psi}\Omega_2(\zeta) + (\widehat{\psi}\zeta)' \\
&= \Omega(\widehat{\psi})\zeta + \widehat{\psi}\Omega(\zeta) + (\widehat{\psi}\zeta)',
\end{align*}
where $(\widehat{\psi}\zeta)'$ denotes the term where one derivative in $\Omega_2$ acts on $\widehat{\psi}$ and the other acts on $\zeta$. 
By the mQME, terms where $\Delta$ and $\Omega$ act on $\psi$ only cancel. Let us look first at the term where $\Omega$ acts on $\zeta$ only. After integrating by parts, $\Omega_0(\zeta)$ replaces $\iota_{X^{\de}}\gamma_i$ by $\dr\iota_{X^{\de}}\gamma_i$, plus contributions from the boundary of the configuration space. As in the proof of the mQME, those are cancelled by $\Omega_1(\zeta)$. Since $\Omega_2$ can only act on products of terms, $\Omega_2(\zeta) = 0$. Next, notice that by properties of BV brackets and derivatives we have $$(\psi,\zeta)= (\Seff,\zeta)\psi \qquad \text{ and } \qquad (\psi\zeta)' = (\Seff\zeta)'\psi. $$
We are left to prove that $(\Seff,\zeta) + (\Seff\zeta)'$ produces all the terms of the form $\iota_{X^{\de}}\dr\gamma$, then the result follows from 
Proposition \ref{changeofdata} and Cartan's magic formula. We summarise this as follows.
\begin{lem}\label{lemma2}
Let $S_i$ be the parts of the effective action as above. Denote by $\iota_{X^{\de}}S_i, \dr S_i$ the operation of replacing all differential forms $\gamma$ appearing in $S_i$ by $\iota_{X^{\de}}\gamma$ or $\dr\gamma$ respectively. Then the following identities hold: 
\begin{align}\Omega_2(S_0\iota_{X^{\de}}S_0) &= (S_1,\iota_{X^{\de}}S_0), \\
\Omega_2(S_0\iota_{X^{\de}}S_2)+\Omega_2(S_2\iota_{X^{\de}}S_0) &= (S_1,\iota_{X^{\de}}S_2),\\
\Omega_2(S_0\iota_{X^{\de}}S_3)+\Omega_2(S_3\iota_{X^{\de}}S_0) &= (S_1,\iota_{X^{\de}}S_3), \\
(S_2,\iota_{X^{\de}}S_0) + (S_0,\iota_{X^{\de}}S_2) &= \iota_{X^{\de}}\dr S_3, \\
(S_2,\iota_{X^{\de}}S_2) &= \iota_{X^{\de}}\dr S_4 + \iota_{X^{\de}}\dr S_5.
\end{align}\end{lem}
As in the proof of the mQME, these are all the relevant brackets and products for our choice of degrees. Since $S_3$ and $S_4$ are the only terms with differential forms that are not closed, all the terms we need are produced and we conclude the statement. \end{proof}
\section{Conclusions and outlook}
We have shown that the BV-BFV formalism can be applied to split Chern-Simons theory and produces a non-trivial example. Also, it seems plausible that using the method applied in section \ref{Integration} it is possible to make statements about the effective action to all orders. Furthermore, the structure of the identities in lemmata \ref{lemma1} and \ref{lemma2} makes it plausible that the structure of the effective action should in fact be governed by the mQME alone, i.e. that one can recover the state in the perturbed theory from the state in the unperturbed theory requiring only that the mQME is satisfied. A natural question to consider would be: to what extent one can make  such a statement rigorous, and in what generality one can prove it. \\
In another direction, in a next step we will use the state on the solid torus to compute the Chern-Simons theta invariants of lens spaces via the gluing operation. The relatively simple expression for the effective action in terms of a propagator and the cohomology on the boundary also should allow for an extension to higher genus handlebodies and other background flat connections, and thereby the computation of the Chern-Simons invariants for all 3-manifolds. 
\bibliographystyle{halpha}
\bibliography{bibliographyVdL}
\end{document}